\documentclass[a4paper, 12pt, reqno]{amsart}
\usepackage{amsmath,amsfonts,amssymb,graphics,mathrsfs,amsthm,eurosym,verbatim,enumerate}
\usepackage[marginratio=1:1,tmargin=117pt, height=650pt]{geometry}
\usepackage[usenames, dvipsnames]{xcolor}

\usepackage[normalem]{ulem}
\usepackage{bm}
\usepackage{tikz-cd}
\usepackage[colorlinks=true,linkcolor=blue!60!black,citecolor=teal!80!black,urlcolor=RoyalBlue]{hyperref}

\numberwithin{equation}{section}
\makeatletter
\def\blfootnote{\xdef\@thefnmark{}\@footnotetext}
\makeatother
\theoremstyle{plain}
\newtheorem{theorem}{Theorem}[section]
\newtheorem{proposition}[theorem]{Proposition}

\newtheorem{lemma}[theorem]{Lemma}
\newtheorem{definition}[theorem]{Definition}

\newcommand*{\defeq}{\mathrel{\vcenter{\baselineskip0.5ex \lineskiplimit0pt
 \hbox{\scriptsize.}\hbox{\scriptsize.}}}%
 =}
\newtheorem*{remark}{Remark}%[section]
%[section]
\newtheorem{observation}[theorem]{Observation}

\theoremstyle{remark}
\newtheorem*{claim}{\normalfont CLAIM}
\newcommand{\C}{{\mathbb{C}}}

\newcommand{\R}{{\mathbb{R}}}
\newcommand{\Z}{{\mathbb{Z}}}
\newcommand{\N}{{\mathbb{N}}}

\newcommand{\M}{{\mathcal{M}}}
\newcommand{\Mlog}{{\mathcal{M}_{\log}}}
\newcommand{\B}{{\mathcal{B}}}
\newcommand{\tef}{transcendental entire function}
\newcommand{\qfor}{\quad\text{for }}
\newcommand{\uldelta}{\underline{\delta}}
\newcommand{\Rea}{\operatorname{Re }}

\newenvironment{subproof}{\begin{proof}[Proof of claim]}{%
               \end{proof}}
\begin{document}
\title[Variations on a theme of Hardy]{Variations on a theme of Hardy concerning the maximum modulus}
\author[{L. Pardo-Sim\'on \and D. J. Sixsmith}]{Leticia Pardo-Sim\'on \and David J. Sixsmith}
\address{Institute of Mathematics of the Polish Academy of Sciences\\ ul. \'Sniadeckich~8\\
00-656 Warsaw\\ Poland\textsc{\newline \indent \href{https://orcid.org/0000-0003-4039-5556}{\includegraphics[width=1em,height=1em]{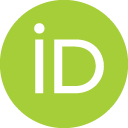} {\normalfont https://orcid.org/0000-0003-4039-5556}}}
}
\email{l.pardo-simon@impan.pl}
\address{School of Mathematics and Statistics\\ The Open University\\
Milton Keynes MK7 6AA\\ UK\textsc{\newline \indent \href{https://orcid.org/0000-0002-3543-6969}{\includegraphics[width=1em,height=1em]{orcid2.png} {\normalfont https://orcid.org/0000-0002-3543-6969}}}}
\email{david.sixsmith@open.ac.uk}
\thanks{2010 Mathematics Subject Classification. Primary 30D15.\vspace{3pt}\\ Key words: entire functions, maximum modulus.\vspace{3pt}\\ }
\begin{abstract}
In 1909, Hardy gave an example of a transcendental entire function, $f$, with the property that the set of points where $f$ achieves its maximum modulus, $\mathcal{M}(f)$, has infinitely many discontinuities. This is one of only two known examples of such an entire function.

In this paper we significantly generalise these examples. In particular, we show that, given an increasing sequence of positive real numbers, tending to infinity, there is a transcendental entire function, $f$, such that $\mathcal{M}(f)$ has discontinuities with moduli at all these values. We also show that the transcendental entire function lies in the much studied Eremenko-Lyubich class. Finally, we show that, with an additional hypothesis on the sequence, we can ensure that $f$ has finite order.
\end{abstract}
\maketitle
\section{Introduction}
Suppose that $f$ is an entire function. We define the \emph{maximum modulus} by
\[
M(r, f) \defeq \max_{|z| = r} |f(z)|, \qfor r \geq 0.
\]
Following \cite{Sixsmithmax}, we denote the set of points where $f$ achieves its maximum modulus, which we call the \emph{maximum modulus set}, by $\M(f)$. In other words
\begin{equation}
\label{Mdef}
\M(f) \defeq \{ z \in \C \colon |f(z)| = M(|z|,f) \}.
\end{equation} 

If $f(z) \defeq cz^n$, for $c \in \C$ and $n \geq 0$, then $\M(f) = \C$; we are not interested in this case. Otherwise, see, for example, \cite[Theorem 10]{valironlectures} or \cite{Blumenthal}, it is known that $\M(f)$ consists of an, at most countable, union of closed \emph{maximum curves}, which are analytic except at their endpoints, and may or may not be unbounded.

In this paper we are interested in discontinuities in the maximum modulus set, which we define as follows. 
\begin{definition}\normalfont 
Let $f$ be an entire function. We say that $\M(f)$ has a \emph{discontinuity} $z$ \emph{at} $r$, for $r>0$, if there exists a connected component $\Gamma$ of $\M(f)$ such that $z \in \Gamma$ with $|z| = r$, and such that $\Gamma$ contains no points of modulus less than $r$.
% $z$ is the endpoint of a maximal curve of $\M(f)$ but meets no other such curve.
\end{definition}

%Such discontinuities were first defined by Blumenthal \cite{Blumenthal}, who called them \emph{exceptional values of the second kind}. (Blumenthal used the term \emph{exceptional value of the first kind} for a point where two maximal curves meet.) However, he did not give any examples of an entire function whose maximum modulus set has such discontinuities, although he conjectured that there is a cubic polynomial with this property.
Such discontinuities were first studied by Blumenthal \cite{Blumenthal}; see also \cite{retrospect}. However, he did not give any examples of an entire function whose maximum modulus set has such discontinuities, although he conjectured that there is a cubic polynomial with this property; such a polynomial was given in \cite{jassimlondon}.

Hardy \cite{hardy1909} was the first to construct maximum modulus sets with discontinuities. Indeed, he constructed a {\tef}, $g$, such that $\M(g)$ has infinitely many discontinuities. Hardy's example is the function
\begin{equation}
\label{eq:gdef}
g(z) \defeq  \alpha \exp\left(e^{z^2} + \sin z\right), \qfor \alpha > 0, 
\end{equation}
where $\alpha$ is assumed to be sufficiently large. Hardy's calculations are complicated but essentially elementary. He shows that, for large values of $r>0$, $\M(g)$ lies on the real line, precisely where the sine function is non-negative. In other words, away from the origin, $\M(g)$ consists of line segments of the form $[2n\pi, (2n+1)\pi]$, for all sufficiently large (positive or negative) integers $n$.

%In Section~\ref{S.basic}, we discuss an alternative approach to proving this result. That is, we construct a transcendental entire function whose maximum modulus set has infinitely many discontinuities. Although this does not add anything essentially new, it illustrates the ideas used in our more complicated construction used to prove our results below.
%
The only other example in the literature of an entire function with infinitely many discontinuities was given in \cite{tyler}, where it was shown that the function
\begin{equation}
\label{eq:fdef}
f(z) \defeq \exp\left(e^{z^2} + 2z \sin^2 z\right),
\end{equation}
has the property that $\M(f)$ has an infinite number of \emph{isolated points};  that is, degenerate maximum curves. %These seem to be the only such examples in the literature.

Our goal in this paper is to give some generalisations of these results, in the following sense. First we show that it is possible to specify the moduli of the  discontinuities, provided they tend to infinity. At the same time we show that the function constructed can be chosen to lie in the \emph{Eremenko-Lyubich} class, denoted by $\B$, which has been much studied in complex analysis and complex dynamics; see, for example, \cite{eremenkoandlyubich, bishop1, bishop, lasseacta, lassedave, rrrs} and the survey \cite{DaveSurvey}. The class $\B$ consists of those {\tef}s for which the set of \emph{singular values}, in other words those values through which some inverse branch cannot be continued, is bounded. In a sense, these functions are the {\tef}s with properties most resembling those of the polynomials. It is easy to see that the functions in \eqref{eq:gdef} and \eqref{eq:fdef} do not lie in the class $\B$.
\begin{theorem}
\label{theo:main}
Suppose that $(r_n)_{n \in \N}$ is an increasing sequence of positive real numbers tending to infinity. Then there is a {\tef} $f \in \B$ such that $\M(f)$ has a discontinuity at $r_n$, for all $n \in \N$.
\end{theorem}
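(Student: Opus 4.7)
The strategy is to build $f$ as an infinite product $f(z) = \prod_{n=1}^{\infty} g_n(z)$, where each factor $g_n$ is a carefully chosen entire function responsible for installing a single discontinuity at modulus $r_n$. Since the statement only requires that there \emph{be} a discontinuity at every $r_n$ (additional discontinuities at other radii are harmless), we may freely enlarge the sequence to any rapidly growing super-sequence that still tends to infinity. We also fix an auxiliary sequence of arguments $(\theta_n) \subset [-\pi, \pi]$, chosen to be well separated, along which the new components of $\M(f)$ will emerge.

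Each factor $g_n$ should satisfy three properties: (a) $g_n-1$ is very small on the disk $\{|z| \le r_{n-1}\}$, so that $g_n$ neither disturbs the discontinuities already installed at smaller radii nor prevents local uniform convergence of the product; (b) on the circle $|z|=r_n$, $|g_n|$ attains a sharp peak at argument $\theta_n$ of magnitude sufficient to dominate the combined contribution of $\prod_{m \neq n} g_m$; and (c) for $|z|>r_n$, $g_n$ grows only moderately, so as not to swamp the analogous peaks of subsequent factors. A natural candidate is $g_n(z) \defeq 1 + \varepsilon_n \varphi_n(z)$, where the $\varphi_n$ are affinely rescaled copies of a single template $\varphi \in \B$ peaked at the prescribed location---using a common template facilitates the subsequent check that the singular values of $f$ remain in a common bounded set.

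Three verifications then remain. First, local uniform convergence of the product follows from (a) once $\varepsilon_n \to 0$ fast enough. Second, that each $r_n$ supports a discontinuity: by estimating $\log|f(re^{i\theta})|$ for $r$ slightly below and slightly above $r_n$, property (b) together with the angular separation of the $\theta_n$ forces the location of the maximum on the circle $|z|=r$ to jump to $\theta_n$ precisely at $r=r_n$, producing a new component of $\M(f)$ that begins at $r_n e^{i\theta_n}$. Third, that $f \in \B$: if each $g_n$ has its critical and asymptotic values confined to a fixed bounded set, the limit function should inherit bounded singular values.

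The main obstacle is the tension between (b) and $\B$-membership: the sharp peaks in $|g_n|$ that force the discontinuities tend to throw critical and asymptotic values off to infinity, pushing $f$ out of $\B$. The key technical task is therefore to design the template $\varphi$ so that its rescaled copies have singular sets inside a common absorbing disk while still giving rise to a pronounced peak at $r_n e^{i\theta_n}$. With such a template in hand, the geometric verifications (a)--(c) reduce to careful---but essentially elementary---modulus estimates in the spirit of Hardy's original calculation.
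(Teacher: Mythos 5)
Your proposal takes a fundamentally different route from the paper (which builds $f$ by prescribing a \emph{single} winding tract $V$ and invoking Rempe-Gillen's Cauchy-integral approximation theorem to obtain an $f\in\B$ whose restriction to $\exp(V)$ is $e^{G}+O(1/z)$), and the route you choose has a genuine gap at exactly the point you flag as ``the key technical task.''

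The central difficulty is that class~$\B$ membership is not inherited by infinite products in the way you assert. Your claim that ``if each $g_n$ has its critical and asymptotic values confined to a fixed bounded set, the limit function should inherit bounded singular values'' is false even for a product of two factors: for instance $e^{z}$ and $\cos z$ both lie in $\B$, yet $e^{z}\cos z$ has critical points near $z=\pi/4+k\pi$ whose critical values are comparable to $\pm e^{k\pi}/\sqrt{2}$ and thus escape to infinity, so $e^{z}\cos z\notin\B$. In general the singular set of a product is governed by cancellations and phase interactions between the factors, not by the union of their singular sets. For your construction this is especially acute because each $g_n$ must have a steep peak in direction $\theta_n$, and with infinitely many $\theta_n\in[-\pi,\pi]$ the directions must accumulate, so neighbouring factors share overlapping half-planes of exponential growth; their product then has critical points with unbounded critical values in that overlap, exactly as in the $e^{z}\cos z$ example. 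Designing a template $\varphi$ that avoids this is not a technical refinement of your argument --- it is the entire theorem, and there is no evidence such a template exists. This is precisely why the paper abandons the ``many building blocks'' approach of its Section~2 (where $f=\sum g(a_n(z-b_n))$ has infinitely many discontinuities but is \emph{not} in $\B$) and instead uses a single tract with Rempe-Gillen's machinery, which produces $\B$-membership as an output of the theorem rather than something to be checked afterwards.

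Two further points are glossed over. First, you assert the maximum ``jumps to $\theta_n$ precisely at $r=r_n$,'' but the exact location of the jump is an emergent property of the whole product, perturbed by every other factor; aligning it with $r_n$ on the nose requires a fine-tuning step. The paper handles this with an infinite-dimensional intermediate-value / topological-degree argument (the surjection $\Phi$ on $\prod[0,\epsilon_n/8]$), and your outline contains no analogue. Second, showing a jump in the argmax is not by itself a discontinuity in the sense of the paper's Definition~1.1: you must also show the new component of $\M(f)$ beginning at $r_n e^{i\theta_n}$ does not wind back to smaller moduli, which needs an argument like the one in part~\eqref{lem_props:4} of Lemma~\ref{lem_props}.
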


The \emph{order} of a {\tef} $f$ is defined by
\[
\rho(f) \defeq \limsup_{r \rightarrow\infty} \frac{\log \log M(r,f)}{\log r}.
\]
We show that, with an additional hypothesis on the sequence $(r_n)_{n \in \N}$, the function can be chosen to have \emph{finite} order. It is easy to see that the functions in \eqref{eq:gdef} and \eqref{eq:fdef} do not have finite order.
\begin{theorem}
\label{theo:finite order}
Suppose that $(r_n)_{n \in \N}$ is a sequence of positive real numbers, and that there exists $C>1$ such that $r_{n+1} > Cr_n$, for $n \in \N$. Then there is a {\tef} $f \in \B$ with $\rho(f) < \infty$, such that $\M(f)$ has a discontinuity at $r_n$, for all $n \in \N$.
\end{theorem}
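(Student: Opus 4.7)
The plan is to modify the argument for Theorem \ref{theo:main} by a quantitative choice of parameters, using the geometric spacing of the $r_n$ to control growth. I envisage $f$ built either as a sum of rescaled building blocks, one per scale $r_n$, or as a product of elementary factors each localised at an $r_n$; schematically,
\[
f(z) \defeq \sum_{n \in \N} c_n \, \phi(z/r_n),
\]
where $\phi \in \B$ is a fixed \tef{} whose maximum modulus set has a discontinuity on the unit circle, chosen with a sufficiently high-order zero at the origin to ensure local uniform convergence of the sum.

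The coefficients $c_n$ must meet two conflicting conditions. On one hand, to ensure $\M(f)$ has a discontinuity at $r_n$, the term $c_n \phi(z/r_n)$ must dominate every other term on some arc of $\{|z| = r_n\}$ strongly enough that the resulting maximum curve cannot extend into $\{|z| < r_n\}$; this forces $\log c_n$ to exceed the accumulated contribution of all other terms at $|z| = r_n$. On the other hand, to force $\rho(f) < \infty$ the coefficients must satisfy $\log c_n = O(r_n^A)$ for some fixed $A$. The hypothesis $r_{n+1} > C r_n$ is what makes these demands compatible: for $m > n$ we have $|z/r_m| \leq C^{-(m-n)}$ on $\{|z| = r_n\}$, so the zero of $\phi$ at the origin renders the tail super-exponentially small in $m-n$; while for $m \leq n$ the growth of $|\phi(z/r_m)|$ is bounded by $\exp\bigl(O((r_n/r_m)^{\rho(\phi)})\bigr)$. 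Both estimates leave enough room to choose $\log c_n$ of purely polynomial size in $r_n$.

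With such a choice, only $O(\log r)$ scales contribute significantly at radius $r$, and summing the per-term bounds yields $\log M(r,f) = O(r^A \log r)$, so $\rho(f) \leq A < \infty$. Membership in $\B$ is inherited from $\phi$: each rescaling $\phi(\cdot/r_n)$ has the same bounded set of singular values, and the singular values of the weighted sum remain bounded because the coefficients, combined with the decay of $\phi$ inside the unit disk, prevent accumulation near infinity.

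The principal obstacle I foresee is the simultaneous tuning of the $(c_n)$: one must find a single sequence for which each $c_n$ is large enough to force a genuine discontinuity at $r_n$—in particular, dominating both the partial sum up to $n-1$ and the tail from $n+1$ onwards—yet small enough to keep $\log c_n$ polynomially bounded. The geometric hypothesis provides precisely the margin needed for this balancing act, after which the finite-order and class-$\B$ estimates follow in a fairly standard manner from the separation of the $r_n$.
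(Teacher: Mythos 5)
Your proposal takes a genuinely different route from the paper, but there are two gaps that I think are fatal, and the second is one the authors themselves flag.

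The more serious gap is the claim that membership in $\B$ is ``inherited'' from $\phi$. The class $\B$ is not closed under sums, even sums of rescaled copies of a single $\B$-function: the singular values of $f = \sum_n c_n \phi(\cdot/r_n)$ are the critical values and asymptotic values of $f$ itself, and these are determined by the equation $\sum_n (c_n/r_n)\phi'(z/r_n)=0$ and by the behaviour of the whole sum along paths to infinity -- neither of which is controlled by the singular sets of the individual summands. You give no argument that the critical values and asymptotic values of the sum stay bounded, and in fact there is no reason to expect this. The paper's own Section~2 constructs $f$ as exactly such a sum of translated building blocks, $f(z)=\sum_n g(a_n(z-b_n))$, achieves the discontinuities, and then explicitly remarks: ``the function in (2.4) is neither in class $\B$, nor of finite order. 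We need a more careful and general way of using Cauchy integrals to prove our two main results.'' Your proposal is essentially a rescaled variant of that Section~2 construction, and you would run into the same obstruction. The actual proof of Theorem~\ref{theo:finite order} instead builds a \emph{single} tract $V$ that winds back and forth, applies the Cauchy-integral approximation result of Theorem~\ref{thm_approx1.7} (from \cite{lasse}) which produces a function already guaranteed to lie in $\B$ by construction, and then controls the order by estimating $\Rea G$ on the tract via the Ahlfors distortion theorem and a hyperbolic-length argument along a curve $\Gamma\subset V$ of linear length.

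The second gap is the placement of the discontinuities. The statement requires a discontinuity \emph{at} each $r_n$, not merely near $r_n$. Having $c_n\phi(z/r_n)$ ``dominate'' on some arc of $\{|z|=r_n\}$ gives you no control over the exact radius at which the maximum curve actually detaches, and a fixed $\phi$ with a discontinuity on the unit circle does not obviously produce a discontinuity at precisely $|z|=r_n$ once the other terms are added. The paper handles this by introducing a free parameter $\uldelta$ in the tract geometry, defining a continuous ``offset'' functional $\phi_n(\uldelta)$ measuring how far the $n$th discontinuity misses $x_n$, and then running a topological fixed-point/surjectivity argument to show some $\uldelta$ makes all the $\phi_n$ vanish simultaneously. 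Your proposal has one scalar tuning parameter $c_n$ per scale, but no mechanism that shows a single choice of $(c_n)$ zeroes out all the offsets at once; in particular, changing $c_n$ to fix the $n$th discontinuity moves all the others. Some analogue of the paper's simultaneous-tuning lemma would be needed, and it is not clear the scalars $c_n$ give you enough freedom to carry it out.

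Your growth heuristic, by contrast, is sound: under $r_{n+1}>Cr_n$, only $O(\log r)$ terms contribute at radius $r$ and the geometric gaps do give room to keep $\log c_n$ polynomial in $r_n$, so the finite-order part of the sketch works. But that is the part the paper also handles with comparatively standard estimates (Ahlfors distortion plus hyperbolic geometry of the tract); the hard parts of this theorem are $\B$-membership and the exact placement of discontinuities, and those are the two places your argument has holes.
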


\begin{remark}\normalfont
Tyler \cite{tyler} gave an example of a polynomial $P$ for which $\M(P)$ has a discontinuity; indeed, $\M(P)$ has an isolated point. It seems natural to ask, in view of Theorem~\ref{theo:finite order}, if there is a {\tef} $f$ with $\rho(f) = 0$, such that $\M(f)$ has at least one discontinuity. We have not been able to answer this question, as the techniques of this paper all give rise to functions in class $\B$, and such functions always have order at least a half; this follows readily from a result in \cite{heins}.
\end{remark}

\subsection*{Structure}
The structure of the paper is as follows. First, in Section~\ref{S.basic}, we discuss an alternative approach to proving Hardy's result. That is, we construct a transcendental entire function whose maximum modulus set has infinitely many discontinuities. Although this does not add anything essentially new, it illustrates the ideas used in our more complicated construction.  In Section~\ref{S.construction} we lay out the technique behind the construction we shall use in our proofs. The proofs of Theorem~\ref{theo:main} and Theorem~\ref{theo:finite order} follow in the final two sections.

\subsection*{Notation}
We let $\mathbb{H}$ denote the right half-plane $\mathbb{H} \defeq \{ z \in \C \colon \Rea z > 0 \}$. For $a \in \C$ and $r > 0$, we define the ball $D(a, r) \defeq \{ z \in \C \colon |z-a| < r \}$. For a hyperbolic domain $V \subset \C$, we denote the hyperbolic distance in $V$ between two points $z, w \in V$ by $d_V(z, w)$. If $A, B, C, D \subset \C$, then we say that $A$ \emph{separates $B$ from $C$ in $D$} if $B$ and $C$ lie in different components of $D \setminus A$. We say that $A$ \emph{separates $B$ from $\infty$ in $D$} if $B$ lies in a bounded component of $D \setminus A$.

\subsection*{Acknowledgments}
We would like to thank Lasse Rempe-Gillen for many useful conversations, Lucas das Dores for valuable comments, and Vasiliki Evdoridou and the referee for helpful feedback.
%
%%%%%
%
%%%%%
%
\section{An alternative to Hardy's function}
\label{S.basic}
In this section we briefly discuss a new {\tef}, $f$, with the property that $\M(f)$ has infinitely many discontinuities. Although this example does not have any particularly novel features when compared to Hardy's example, it illustrates some of the ideas used in our more complicated construction. It also, in some sense, has the most straightforward proof to date, and so seems worth recording.

Following \cite{stallard}, set
\[
G_0 \defeq \{ x + iy \in \C \colon x > 0 \text{ and } |y| < \pi \}.
\]

For $z \in \C \setminus\overline{G_0}$, we can define a function $g$ by setting
\begin{equation}
\label{eq:f0def}
g(z) \defeq \frac{1}{2\pi i} \int_{\partial G_0} \frac{\exp(e^t)}{t-z} \ dt,
\end{equation}
where the integration is taken clockwise. It is shown in \cite{polya} that $g$ can be extended to a {\tef}, which we continue to denote by $g$, with the properties that
\begin{equation}
\label{eq:f0}
g(z) = 
\begin{cases} 
O(1/z), &\text{ for } z \notin G_0, \\
\exp(e^z) + O(1/z), &\text{ for } z \in G_0,
\end{cases}
\end{equation}
where the $O(.)$ terms above are both as $z \rightarrow \infty$.

For $n \in \N$, set $a_n \defeq 2^n$ and $b_n \defeq n(1 + 4\pi i)$, let
\begin{equation}
\label{eq:newfdef}
f(z) \defeq \sum_{n=0}^{\infty} g(a_n(z - b_n)),
\end{equation}
and define the horizontal half-strip 
\begin{equation}
\label{eq:Gndef}
G_n \defeq \{ z \in \C \colon a_n(z - b_n) \in G_0 \}.
\end{equation}

It follows from \eqref{eq:f0} that the partial sums in \eqref{eq:newfdef} converge locally uniformly, and so $f$ is a transcendental entire function. As a consequence of \eqref{eq:f0}, there exists $r_0 >0$ such that $\M(f) \setminus D(0, r_0) \subset \bigcup_{n=0}^\infty G_n$. Moreover, by \eqref{eq:newfdef}, for each $n \in \N$, we have that $$\max_{\substack{z \in G_n  \\ |z| = r}} |f(z)| < \max_{\substack{z \in G_{n+1}  \\ |z| = r}} |f(z)|$$ for all sufficiently large $r>0$. It follows that for all $r > r_0$ we have
\[
\M(f) \setminus D(0, r) \subset \bigcup_{n = p(r)}^\infty G_n,
\]
where $p(r) \rightarrow \infty$ as $r \rightarrow \infty$. The fact that $\M(f)$ has infinitely many discontinuities follows from this observation, together with the fact that the sets $G_n$ defined in \eqref{eq:Gndef} are disjoint.

Finally we remark that, with suitable modifications to the parameters in \eqref{eq:newfdef}, it is possible to gain quite a high degree of control over the moduli of the discontinuities in $\M(f)$. Roughly speaking, by reducing the parameters $a_n$ we can force discontinuities to occur close to the left-hand boundaries of the half-strips $G_n$, and by modifying the parameters $b_n$ we can ensure that the discontinuities have moduli close to prescribed values. However, it does not seem possible to get the same level of control as in the statement of Theorem~\ref{theo:main}. Moreover, the function in \eqref{eq:newfdef} is neither in class $\B$, nor of finite order. We need a more careful and general way of using Cauchy integrals to prove our two main results. 
%
%%%%%
%
%%%%%
%
\section{Background to the construction}\label{sec_prelim}
\label{S.construction}
Note that the sets $G_n$, defined in Section~\ref{S.basic}, are \emph{tracts} for the function $f$, in the sense that $|f|$ is bounded on the boundary of each $G_n$, which is an unbounded simply-connected domain; this terminology is standard when working in the class~$\B$, in which setting $f$ maps as a universal covering on the tract. The discontinuities in $\M(f)$ are achieved by forcing $\M(f)$ to ``jump'' from one tract to another. This suggests it might be possible to find a class $\B$ function with infinitely many discontinuities by using the construction in \cite{bishop}. Very roughly speaking, this result allows us to define a {\tef} in the class $\B$ by defining a collection of tracts. However, the construction of this {\tef} introduces a quasiconformal map, and it seems hard to control the effects of this map. As a result of this, we have not been able to use this construction in our setting. 

Instead, our main construction is based on results from \cite{lasse}. Unlike the result of \cite{bishop}, these only allow for a single tract, on which a function is defined. Then, proceeding as in Section 2, this function is extended to a transcendental entire map using Cauchy integrals. The careful choice of the functions involved provides  much more control of the {\tef} that results; in fact the map can be specified up to a small error function, and moreover can be chosen to lie in the class $\B$, see Theorem \ref{thm_approx1.7} below. Accordingly, we will use the result of \cite{lasse}, and ensure that the maximum modulus set of the resulting map has the desired discontinuities by constructing  a unique tract that ``winds'' forwards and backwards; see Figure~\ref{fig:V}.

We begin with some results and definitions, adapted from \cite{lasse}. 

\begin{definition}[{\cite[Definition 1.6]{lasse}}]\normalfont
A \emph{model} is a triplet $(G, V, H)$ with the following properties.
\begin{enumerate}[(i)]
\item $H \subset \C$ is a simply-connected domain that contains $\mathbb{H}$.
\item $V \subset \C$ is a simply-connected domain, disjoint from its $2\pi i$ translates.
\item $G \colon V \rightarrow H$ is a conformal isomorphism, such that if $\{z_n\}_{n\in\N}\subset H$ is a sequence with $z_n \rightarrow \infty$ in $H$, then $\Rea G^{-1}(z_n)\rightarrow +\infty$.
\end{enumerate}
\end{definition}

\begin{remark}\normalfont
The equivalent definition in \cite{lasse} defines a model function using a conformal map $\Psi \colon \exp(V) \to H$, such that $G = \Psi \circ \exp$. Our definition, and the theorem below, are equivalent, and slightly easier to work with in our setting.
\end{remark}

In this paper it is useful to make a specific choice of $H$, and so, following \cite{lasse}, we fix for the rest of the document 
\begin{equation}
\label{eq_H}
 H \defeq \{ x+iy \colon x> -14\log_+\vert y\vert \},
\end{equation}
where $\log_+(t)\defeq \max(0, \log t)$.

 Since $V$ is disjoint from its $2 \pi i$ translates, we can define a single-valued logarithm on $\exp(V)$. Using this logarithm, we can then define an analytic function $g \colon \exp(V) \to \exp(H)$ with the property that
\begin{equation}
\label{eq:gG}
g \circ \exp = \exp \circ \ G.
\end{equation} 

The following allows us to approximate the function $g$ with an entire function $f \in \B$. This result is adapted from \cite[Theorem 1.7]{lasse} together with \cite[Corollary 4.5]{lasse}.
\begin{theorem}
\label{thm_approx1.7}
Let $(G, V, H)$ be a model, with $H$ fixed in \eqref{eq_H}, and let $g$ be as in \eqref{eq:gG}. Then there exists $f\in \mathcal{B}$ such that
\begin{equation}
\label{eq_f}
f(z) \defeq 
\begin{cases}
g(z)+h(z), &\text{for } z\in \exp(V) \\
h(z),   &\text{otherwise.}
\end{cases}
\end{equation}
Here $h\colon \C \rightarrow \C$ is such that there is a constant $M > 0$, that depends only on $G^{-1}(1)$, such that $\vert h(z)\vert \leq M/\vert z \vert $, for $|z| > 1$.
\end{theorem}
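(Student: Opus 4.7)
The approach is to mimic the P\'olya-style Cauchy-integral construction sketched in Section~\ref{S.basic}, but applied to the general tract $\exp(V)$. Concretely, one sets
\[
f(z) \defeq \frac{1}{2\pi i}\int_{\partial\exp(V)} \frac{g(t)}{t-z}\,dt,
\]
integrated clockwise along $\partial\exp(V)$, and then shows that this expression defines an entire function which agrees with $g+h$ on $\exp(V)$ and with $h$ elsewhere.

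The first step is to verify that the integral converges and defines an entire function off the contour. Here the specific shape \eqref{eq_H} of $H$ is essential: on $\partial H$ one has $\Rea w = -14\log_+|y|$ for $w = x+iy$, and therefore $|\exp w| \leq (1+|y|)^{-14}$ on $\partial H$. Transporting this estimate via \eqref{eq:gG} and the conformal isomorphism $G\colon V\to H$, the function $|g|$ decays very rapidly along $\partial\exp(V)$, at a rate easily dominating the arclength growth of $\partial\exp(V)$ at infinity. Hence the Cauchy integral converges absolutely and locally uniformly in $z$, so $f$ is entire.

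Next, one splits the analysis into the two regions. For $z \in \exp(V)$, a standard contour deformation across $z$ picks up a residue equal to $g(z)$, and what remains is a boundary term which we call $h(z)$; for $z$ outside $\overline{\exp(V)}$, the integral itself is $h(z)$. Writing $\tfrac{1}{t-z} = -\tfrac{1}{z}\cdot\tfrac{1}{1-t/z}$ for $|z|$ large relative to the part of $\partial\exp(V)$ where $g$ is non-negligible, and using the decay estimate above, one obtains $|h(z)| \leq M/|z|$, with $M$ depending only on $\int_{\partial\exp(V)}|g(t)|\,|dt|$, which in turn depends only on $G^{-1}(1)$ through the conformal normalization of $G$. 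This yields the decomposition \eqref{eq_f}.

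Finally, to prove $f \in \mathcal{B}$, we identify the singular values. Since $V$ is disjoint from its $2\pi i$ translates, $\exp|_V$ is injective and $g = \exp\circ G\circ (\exp|_V)^{-1}$ is a conformal isomorphism $\exp(V) \to \exp(H)$, with no critical points and only the asymptotic value $0$ (inherited from the condition $\Rea G^{-1}(z_n)\to +\infty$ as $z_n\to\infty$ in $H$). Outside $\exp(V)$, $f=h$ is bounded by $M$. The main obstacle, and the core content of \cite[Corollary 4.5]{lasse}, is showing that adding the small perturbation $h$ on $\exp(V)$ does not create new unbounded critical or asymptotic values; this requires a careful study of the inverse branches of $f$ on the set where $|f|$ is large. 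Once this is established, the singular set of $f$ is bounded and $f \in \mathcal{B}$.
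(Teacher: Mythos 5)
The paper does not prove this theorem: it is stated as an adaptation of \cite[Theorem 1.7 and Corollary 4.5]{lasse}, and the accompanying remark only indicates that the construction uses Cauchy integrals, deferring all details to that reference. So your proposal cannot be compared to a proof ``in the paper''; what follows is a check of your sketch on its own terms.

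The Cauchy-integral skeleton you describe is the right one and matches the paper's remark. However, there is a genuine error in the $\mathcal{B}$-membership step. You assert that $g = \exp\circ G\circ(\exp|_V)^{-1}$ is a \emph{conformal isomorphism} from $\exp(V)$ to $\exp(H)$. It is not. While $(\exp|_V)^{-1}$ and $G$ are isomorphisms, the outer map $\exp|_H$ is far from injective: $H$ contains the entire right half-plane $\mathbb{H}$, so $\exp|_H$ is an infinite-to-one covering of $\exp(H)=\C^*$ onto itself. Consequently $g\colon\exp(V)\to\C^*$ is an infinite-degree covering map, not an isomorphism. This is precisely the ``logarithmic tract'' structure that makes the class-$\B$ argument nontrivial: one must show that, over points of large modulus, all (infinitely many) inverse branches of $f=g+h$ extend to disks of definite size, and this requires quantitative control of $g'$ and of the perturbation $h$, not just the observation that $g$ omits a neighbourhood of a point. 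Your conclusion (the finite singular values of $g$ reduce to the single value $0$) happens to be right for a covering onto $\C^*$, but the justification you give is wrong, and the real work of showing $h$ does not create new unbounded singular values is entirely deferred to the citation.

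A second, softer gap: you claim the constant $M$ depends only on $G^{-1}(1)$ because $\int_{\partial\exp(V)}|g(t)|\,|dt|$ does. That integral a priori depends on the full geometry of $V$; that it can be bounded \emph{uniformly} over all admissible tracts with the fixed normalisation $G^{-1}(1)$ is a consequence of the specific choice of $H$ in \eqref{eq_H} together with distortion estimates for the conformal map $G$ (controlling the arclength of $\partial\exp(V)$ in dyadic annuli against the polynomial decay of $|g|$). Asserting the dependence without these estimates leaves the key uniformity unexplained. Since the theorem is being quoted rather than proved in the paper, these omissions are not fatal to the paper, but your sketch should not present the isomorphism claim as the reason $f\in\B$.
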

\begin{remark}\normalfont
As noted earlier, the function $f$ in Theorem~\ref{thm_approx1.7} is constructed using Cauchy integrals, in a similar way to the function $g$ in \eqref{eq:f0def}. Thus it is not a coincidence that the function $h$ has similar size to the $O(.)$ terms in \eqref{eq:f0}. See \cite[Theorem 2.1]{lasse} for more details.
\end{remark}
%
%%%%%
%
%%%%%
%
\section{Proof of Theorem~\ref{theo:main}}
Suppose that $(r_n)_{n\in\N}$ is an increasing sequence of positive real numbers tending to infinity. We may assume without loss of generality that $(r_n)_{n\in\N}$ is strictly increasing, since otherwise we can choose a strictly increasing subsequence $(r_{n_k})_{n_k\in\N}$; then, if Theorem \ref{theo:main} holds for $(r_{n_k})_{n_k\in\N}$, it also holds for  $(r_n)_{n\in\N}$. We are required to construct a {\tef}, $f$, such that $\M(f)$ has a discontinuity at each of these moduli. 

For an entire function $f$, define
\[
\Mlog(f) \defeq \{ z \in \C \colon |f(e^z)| = \max_{\substack{ w \in \C \\ \Rea w = \Rea z}} |f(e^w)| \}.
\] 
It is a calculation that $\exp(\Mlog(f)) \subseteq \M(f) \subseteq \exp(\Mlog(f)) \cup \{0\}$. For each $n \in \N$, we set $x_n \defeq \log r_n$. We will prove our result by proving the equivalent fact that $\Mlog(f)$ has discontinuities at points with \emph{real part} equal to $x_n$; in other words, there exists a connected component $\Gamma$ of $\Mlog(f)$ such that $\Gamma$ contains a point with real part $x_n$, but no points of real part less than $x_n$.

The proof of this fact is complicated, and splits into three steps. First we carefully design a tract $V$, which winds backwards and forwards near each value $x_n$, and an associated Riemann map $G$; see Figure~\ref{fig:V}. The definition of $V$ depends on an infinite dimensional variable that we denote by $\uldelta$. We use $V$ and $G$, together with the results in Section~\ref{S.construction} to obtain a {\tef} $f$. Next we prove various estimates on these functions, and use these estimates to show that $\Mlog(f)$ necessarily has discontinuities close to the values $(x_n)_{n \in \N}$. Finally, we show that a suitable choice of the variable $\uldelta$ gives the required result.

We begin by defining our domain $V$. For a hyperbolic domain $U \subset \C$, we denote by $\rho_U(z)$ the hyperbolic density at the point $z \in U$. If $U$ is simply-connected, then, see \cite[Theorems 8.2 and 8.6]{beardonandminda}, we have the following standard estimate; 
\begin{equation}
\label{eq:hypest}
\frac{1}{2\operatorname{dist}(z, \partial U)} \leq \rho_U(z) \leq \frac{2}{\operatorname{dist}(z, \partial U)}, \qfor z \in U,
\end{equation}
where dist$(z, \partial U)$ is the Euclidean distance between $z$ and $\partial U$.

For $\ell > 0$, consider the rectangle
\begin{align*}
R_0 = R_0(\ell) \defeq \{ x + iy \in \C \colon -2 < x < 0 \text{ and } |y| < \ell \}.
\end{align*}

\begin{observation}
\label{claim:ell}
We can choose $\ell$ sufficiently small that the following holds. Suppose that $V \subset \C$ is an unbounded simply-connected domain, which strictly contains $R_0$, and is such that the upper, lower and left-hand boundaries of $R_0$ all lie in $\partial V$. Suppose also that $G \colon V \to H$ is a conformal map with $G(-1) = 1$, and also such that $G(z) \rightarrow \infty$ as $\Rea z \rightarrow +\infty$. Then 
\[
\max_{z \in \partial R_0 \cap V} \Rea G(z) \geq 4. 
\]
\end{observation}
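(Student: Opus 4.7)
The plan is to combine a direct lower bound $d_V(-1, E) \geq 1/(2\ell)$, where $E \defeq \partial R_0 \cap V = \{iy : |y| < \ell\}$ is the right edge of $R_0$, with an upper bound by a universal constant (here $\log 4$) that follows from assuming the conclusion fails. Choosing $\ell$ small enough then breaks the compatibility of the two bounds.

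For the lower bound, the key observation is that since the upper, lower and left sides of $R_0$ all lie in $\partial V$, every $z \in R_0$ satisfies $\operatorname{dist}(z, \partial V) \leq \ell - |\operatorname{Im} z| \leq \ell$, and hence $\rho_V(z) \geq 1/(2\ell)$ on $R_0$ by \eqref{eq:hypest}. Moreover, because $E$ is the only portion of $\partial R_0$ that belongs to $V$, any arc in $V$ from $-1$ to a point of $E$ must contain an initial sub-arc $\beta \subset \overline{R_0}$ ending at its first meeting with $E$, and $\beta$ has Euclidean length at least $1$ (the horizontal separation between $-1$ and $E$). Integrating $\rho_V \geq 1/(2\ell)$ along $\beta$ yields $d_V(-1, E) \geq 1/(2\ell)$.

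Now suppose for contradiction that $\max_{z \in E} \Rea G(z) < 4$, and consider the ray $\gamma(t) \defeq 1 + t$ for $t \geq 0$, which lies in $\mathbb{H} \subset H$ and tends to $\infty$ in $H$. By the defining condition of a model, $\Rea G^{-1}(\gamma(t)) \to +\infty$, so $G^{-1}(\gamma(t)) \in V \setminus \overline{R_0}$ for all sufficiently large $t$, whereas $\gamma(0) = 1 = G(-1) \in G(R_0)$. Since $V \setminus E$ is the disjoint union of the open sets $R_0$ and $V \setminus \overline{R_0}$, applying the homeomorphism $G$ shows that $H \setminus G(E)$ is the disjoint union of $G(R_0)$ and $G(V \setminus \overline{R_0})$; a standard connectedness argument on $[0, \infty)$ then forces $\gamma$ to meet $G(E)$ at some first point $w^*$. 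One has $w^* > 1$ (since $\gamma(0) \notin G(E)$), and by the contradictory hypothesis $\Rea w^* < 4$, so $w^* \in (1, 4)$.

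Finally, conformal invariance of the hyperbolic metric under $G$, together with the inclusion $\mathbb{H} \subset H$ (which gives $d_H \leq d_{\mathbb{H}}$) and the fact that the positive real axis is a hyperbolic geodesic of $\mathbb{H}$ with density $1/\Rea z$, yields
\[
\tfrac{1}{2\ell} \leq d_V(-1, E) = d_H(1, G(E)) \leq d_H(1, w^*) \leq d_{\mathbb{H}}(1, w^*) = \log w^* < \log 4.
\]
This forces $\ell > 1/(2\log 4)$, so fixing any $\ell \leq 1/(2\log 4)$ (for instance $\ell = 1/3$) proves the observation. The main technical delicacy is the extraction of the crossing point $w^*$, which relies on the defining property of a model through the fact that the preimage $G^{-1}(\gamma(t))$ must ultimately leave $\overline{R_0}$; everything else is a routine chain of hyperbolic estimates.
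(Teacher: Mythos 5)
Your proof is correct and takes the same route as the paper's (very terse) argument: a lower bound on $d_V(-1,\partial R_0\cap V)$ of order $1/\ell$ via \eqref{eq:hypest}, combined with conformal invariance and an upper bound on the corresponding hyperbolic distance in $H$ under the assumption that the conclusion fails. You have merely filled in details the paper leaves implicit, most usefully the extraction of the crossing point $w^*$ on the ray $[1,\infty)$. One small caveat: you invoke property (iii) of the definition of a model (that $\Rea G^{-1}(z_n)\to+\infty$ as $z_n\to\infty$ in $H$) to conclude that $G^{-1}(\gamma(t))$ eventually leaves $\overline{R_0}$, but the Observation as stated only assumes the converse-direction normalization $G(z)\to\infty$ as $\Rea z\to+\infty$; in the paper's actual application this is harmless since the $V(\underline{\delta})$ constructed there do give a model, but strictly speaking you are using a hypothesis slightly stronger than what is written. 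Beyond that the hyperbolic estimates, the decomposition $V\setminus E=R_0\sqcup(V\setminus\overline{R_0})$, and the final chain $\tfrac{1}{2\ell}\leq d_V(-1,E)=d_H(1,G(E))\leq d_{\mathbb H}(1,w^*)<\log 4$ are all sound and yield a concrete admissible value of $\ell$.
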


\begin{proof}
This is possible because, independent of the shape of $V$, the hyperbolic distance in $V$ from $-1$ to $\partial R_0 \cap V$ tends to infinity as $\ell$ tends to zero; see \eqref{eq:hypest}. 
\end{proof}

 From now on we assume that $\ell$ has been chosen so that the implication of Observation~\ref{claim:ell} holds. Let $M > 0$ be the constant from Theorem~\ref{thm_approx1.7} with $G^{-1}(1) = -1$. Increasing $M$ if necessary, we can assume that $M > e^2$. Set 
\begin{equation}\label{eq_L}
L\defeq \log M, \quad \text{ so that} \quad L > 2.
\end{equation}

Observe that we can assume, without loss of generality, that $r_1 > e^{L+3}$ so that $x_1 > L + 3$. Otherwise, we choose $\lambda>0$ large enough that $\lambda r_1 > e^{L + 3}$. By considering the sequence $(\lambda r_n)_{n\in\N}$, we construct a function $\tilde{f}$ so that $\M(\tilde{f})$ has discontinuities at the points $\lambda r_n$. Finally, we set $f(z)\defeq \tilde{f}(\lambda z)$, and note that $\M(f)=\lambda^{-1}\M(\tilde{f})$ has discontinuities at the required moduli.

 For simplicity of notation, set $x_0 \defeq \epsilon_0 \defeq 0$. For each $n \in \N$ choose
\begin{equation}\label{eq_epsilon_n}
0 < \epsilon_n < \frac{1}{8}\min\left\{x_{n+1} - x_n, x_n - x_{n-1}, \frac{1}{2} \right\}.
\end{equation}
Let us consider the set
\[
\Delta\defeq \prod_{n\in\N} [0,\epsilon_n/8].
\]
If $\uldelta \in \Delta$, for each $n \in \N$ we shall write $\delta_n$ for the $n$th coordinate of $\uldelta$; in other words, we shall assume that $\underline{\delta} = \delta_1 \delta_2 \ldots$. 

Suppose that $\uldelta \in \Delta$. For each $n \in \N$ define six rectangles as follows:
\begin{alignat*}{3} 
R^1_n = R^1_n(\uldelta) &\defeq \{ x + iy \in \C \colon x\in [x_{n-1} + 3\epsilon_{n-1}/2, x_n + \epsilon_n/2] &&\text{ and } y\in(0,1) \},\\ 
R^2_n = R^2_n(\uldelta) &\defeq \{ x + iy \in \C \colon x\in ( x_{n} + \epsilon_{n}/2 , x_n + \epsilon_n ) &&\text{ and } y\in ( -\epsilon_n/32 , 1) \},\\
R^3_n = R^3_n(\uldelta) &\defeq \{ x + iy \in \C \colon x\in [ x_n - \delta_n + \epsilon_n/8 , x_n + \epsilon_n/2] &&\text{ and } y\in ( -\epsilon_n/32 , 0) \},\\
R^4_n = R^4_n(\uldelta) &\defeq \{ x + iy \in \C \colon x\in ( x_{n} - \delta_{n} , x_n - \delta_n + \epsilon_n/8 ) &&\text{ and } y\in ( -1 , 0) \},\\
R^5_n = R^5_n(\uldelta) &\defeq \{ x + iy \in \C \colon x\in [ x_n - \delta_n + \epsilon_n/8 , x_n + \epsilon_n] &&\text{ and } y\in ( -1 , -\epsilon_n/32) \},\\
R^6_n = R^6_n(\uldelta) &\defeq \{ x + iy \in \C \colon x\in ( x_{n} + \epsilon_{n} , x_n + 3\epsilon_n/2 ) &&\text{ and } y\in ( -1 , 1) \}.
\end{alignat*}
We then append the rectangle $R_0$ to the union of all these, to define the simply-connected domain 
\[
V = V(\underline{\delta}) \defeq R_0 \cup \bigcup_{n \in \N} \bigcup_{j=1}^6 R^j_n \setminus\{ x + iy \in \C \colon x=0 \text{ and } y\in(\ell,1) \}.
\]

\begin{figure}[htb]
\begingroup%
\makeatletter%
\providecommand\color[2][]{%
	\errmessage{(Inkscape) Color is used for the text in Inkscape, but the package 'color.sty' is not loaded}%
	\renewcommand\color[2][]{}%
}%
\providecommand\transparent[1]{%
	\errmessage{(Inkscape) Transparency is used (non-zero) for the text in Inkscape, but the package 'transparent.sty' is not loaded}%
	\renewcommand\transparent[1]{}%
}%
\providecommand\rotatebox[2]{#2}%
\newcommand*\fsize{\dimexpr\f@size pt\relax}%
\newcommand*\lineheight[1]{\fontsize{\fsize}{#1\fsize}\selectfont}%
\ifx\svgwidth\undefined%
\setlength{\unitlength}{332.18504333bp}%
\ifx\svgscale\undefined%
\relax%
\else%
\setlength{\unitlength}{\unitlength * \real{\svgscale}}%
\fi%
\else%
\setlength{\unitlength}{\svgwidth}%
\fi%
\global\let\svgwidth\undefined%
\global\let\svgscale\undefined%
\makeatother%
\begin{picture}(1,0.59999996)%
\lineheight{1}%
\setlength\tabcolsep{0pt}%
\put(0,0){\includegraphics[width=\unitlength,page=1]{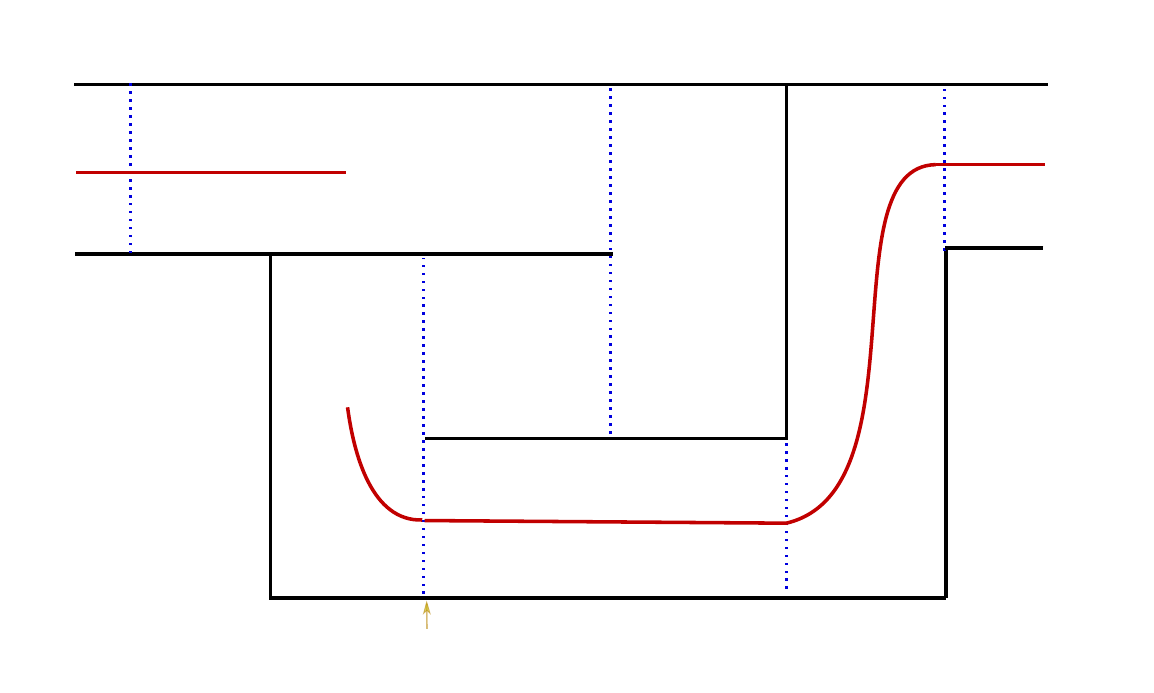}}%
\put(0.02575439,0.55905868){\color[rgb]{0,0,0}\makebox(0,0)[lt]{\lineheight{0}\smash{\begin{tabular}[t]{l}$x_{n-1}+\frac{3\epsilon_{n-1}}{2}$\end{tabular}}}}%
\put(0.17422972,0.03157987){\color[rgb]{0,0,0}\makebox(0,0)[lt]{\lineheight{0}\smash{\begin{tabular}[t]{l}$x_n-\delta_n$\end{tabular}}}}%
\put(0.47282388,0.55926888){\color[rgb]{0,0,0}\makebox(0,0)[lt]{\lineheight{0}\smash{\begin{tabular}[t]{l}$x_n+\frac{\epsilon_n}{2}$\end{tabular}}}}%
\put(0.62748059,0.55997667){\color[rgb]{0,0,0}\makebox(0,0)[lt]{\lineheight{0}\smash{\begin{tabular}[t]{l}$x_n+\epsilon_n$\end{tabular}}}}%
\put(0.76449628,0.56046583){\color[rgb]{0,0,0}\makebox(0,0)[lt]{\lineheight{0}\smash{\begin{tabular}[t]{l}$x_n+\frac{3\epsilon_n}{2}$\end{tabular}}}}%
\put(0,0){\includegraphics[width=\unitlength,page=2]{figureTractv2.pdf}}%
\put(0.0149611,0.37244393){\color[rgb]{0,0,0}\makebox(0,0)[lt]{\lineheight{0}\smash{\begin{tabular}[t]{l}$0$\end{tabular}}}}%
\put(0,0){\includegraphics[width=\unitlength,page=3]{figureTractv2.pdf}}%
\put(0.01318109,0.52056857){\color[rgb]{0,0,0}\makebox(0,0)[lt]{\lineheight{0}\smash{\begin{tabular}[t]{l}$1$\end{tabular}}}}%
\put(0,0){\includegraphics[width=\unitlength,page=4]{figureTractv2.pdf}}%
\put(0.16393651,0.07863721){\color[rgb]{0,0,0}\makebox(0,0)[lt]{\lineheight{0}\smash{\begin{tabular}[t]{l}$-1$\end{tabular}}}}%
\put(0,0){\includegraphics[width=\unitlength,page=5]{figureTractv2.pdf}}%
\put(0.09444687,0.21210398){\color[rgb]{0,0,0}\makebox(0,0)[lt]{\lineheight{0}\smash{\begin{tabular}[t]{l}$-\epsilon_n/32$\end{tabular}}}}%
\put(0.33462704,0.4579357){\color[rgb]{0,0,0.50196078}\makebox(0,0)[lt]{\lineheight{0}\smash{\begin{tabular}[t]{l}\fontsize{10pt}{1em}$R^1_n$\end{tabular}}}}%
\put(0.57811865,0.35947872){\color[rgb]{0,0,0.50196078}\makebox(0,0)[lt]{\lineheight{0}\smash{\begin{tabular}[t]{l}\fontsize{10pt}{1em}$R^2_n$\end{tabular}}}}%
\put(0.42923148,0.29762485){\color[rgb]{0,0,0.50196078}\makebox(0,0)[lt]{\lineheight{0}\smash{\begin{tabular}[t]{l}\fontsize{10pt}{1em}$R^3_n$\end{tabular}}}}%
\put(0.27704016,0.29771665){\color[rgb]{0,0,0.50196078}\makebox(0,0)[lt]{\lineheight{0}\smash{\begin{tabular}[t]{l}\fontsize{10pt}{1em}$R^4_n$\end{tabular}}}}%
\put(0.49436893,0.16563953){\color[rgb]{0,0,0.50196078}\makebox(0,0)[lt]{\lineheight{0}\smash{\begin{tabular}[t]{l}\fontsize{10pt}{1em}$R^5_n$\end{tabular}}}}%
\put(0.70323633,0.30410024){\color[rgb]{0,0,0.50196078}\makebox(0,0)[lt]{\lineheight{0}\smash{\begin{tabular}[t]{l}\fontsize{10pt}{1em}$R^6_n$\end{tabular}}}}%
\put(0.31678137,0.03064304){\color[rgb]{0,0,0}\makebox(0,0)[lt]{\lineheight{0}\smash{\begin{tabular}[t]{l}$x_n-\delta_n + \frac{\epsilon_n}{8}$\end{tabular}}}}%
\put(0,0){\includegraphics[width=\unitlength,page=6]{figureTractv2.pdf}}%
\put(0.28274203,0.56006874){\color[rgb]{0.84313725,0,0}\makebox(0,0)[lt]{\lineheight{0}\smash{\begin{tabular}[t]{l}$x_{n}$\end{tabular}}}}%
\end{picture}%
\endgroup%

\caption{A schematic of part of the simply-connected domain $V(\uldelta)$; this is not drawn to scale. The idea of our construction is to force $\Mlog(f)$, which is shown in red, to ``jump'' from $R^1_n$ to $R^4_n$ for all $n\in \N$, and we control when this ``jump'' happens by careful control of the parameter $\uldelta$. The additional rectangle $R_0$ lies to the left of this diagram.}
	\label{fig:V}
\end{figure}

See Figure \ref{fig:V}. Finally, we let $G$ be the conformal isomorphism from $V$ to $H$, such that $G(-1) = 1$, and such that $G(z) \rightarrow \infty$ as $\Rea z \rightarrow +\infty$. Note that $V$ and $G$ depend on $\uldelta$, but we suppress this dependence for simplicity. This completes the definitions of $V$ and $G$.

It follows by Observation~\ref{claim:ell} that if $\gamma \subset V \setminus R_0$ is a crosscut of $V$ that separates $-1$ from $\infty$, then
\begin{equation}
\label{eq:G}
\max_{z \in \gamma} \Rea G(z) \geq 4,
\end{equation}
 and this holds independently of $\uldelta$. (By a \emph{crosscut} of $V$ we mean a subset $A \subset V$ which is homeomorphic to the open interval $(0,1)$, such that the closure of $A$ is homeomorphic to a closed interval with exactly the two endpoints in $\partial V$.)

We next prove an estimate on $G$ that holds independently of the parameter $\uldelta$. It is helpful, for $n \in \N$, to set $$I_n \defeq (x_n - \delta_n, x_n - \delta_n + \epsilon_n/8).$$ Observe that $R^4_n = I_n \times (-1, 0)$. The significance of this interval is that we will show that $\Mlog(f)$ has a discontinuity with real part in $I_n$ whenever $\uldelta \in \Delta$.
\begin{proposition}
\label{prop_maxRe}
Suppose that $n \in \N$, and that $\uldelta \in \Delta$. Then there exists a value $t \in I_n$ such that
\begin{equation}
\label{eq:nice}
\max_{\substack{ z \in R^4_n \\ \Rea z = t'}} \operatorname{Re} G(z) > \max_{\substack{ z \in R^1_n \\ \Rea z = t'}}\operatorname{Re} G(z) + 1, \qfor t' \in [t, x_n - \delta_n + \epsilon_n/8).
\end{equation} 
\end{proposition}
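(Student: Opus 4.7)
The plan is to pass to the limit $t' \to (x_n - \delta_n + \epsilon_n/8)^-$, establish the strict inequality for the limiting maxima, and then extend by continuity to a half-open interval $[t, x_n - \delta_n + \epsilon_n/8)$.

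Set $M_j(t'):=\max\{\Rea G(z):z\in R^j_n,\ \Rea z=t'\}$ for $j\in\{1,4\}$; both functions are continuous in $t'\in I_n$. As $t' \to (x_n-\delta_n+\epsilon_n/8)^-$, the slice $R^4_n \cap \{\Rea z=t'\}$ converges to the right boundary of $R^4_n$, which lies in the interior of $V$ (as the left boundaries of $R^3_n$ and $R^5_n$). Writing $M_j^*$ for these one-sided limits, it suffices to show $M_4^* > M_1^* + 1$. Set $\sigma_1^* := \{x_n - \delta_n + \epsilon_n/8\} \times (0, 1) \subset R^1_n$ and $\sigma_4^{*,\uparrow} := \{x_n - \delta_n + \epsilon_n/8\} \times (-\epsilon_n/32, 0)$ (the upper half of the limit slice, contained in the interior of $V$); then $M_1^* = \max_{\sigma_1^*} \Rea G$ and $M_4^* \ge \max_{\sigma_4^{*,\uparrow}} \Rea G$.

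The sets $\sigma_1^*$ and $\sigma_4^{*,\uparrow}$ lie on opposite sides of the horizontal slit $S_1 := [x_n - \delta_n, x_n + \epsilon_n/2] \times \{0\} \subset \partial V$. Any path in $V$ joining them must wind around the right end of $S_1$ via $R^1_n \to R^2_n \to R^3_n$; in particular, it must traverse $R^3_n$ horizontally over a distance of at least $3\epsilon_n/8$. Applying the hyperbolic-density bound from \eqref{eq:hypest}, one gets $\rho_V(w) \ge 16/\epsilon_n$ throughout $R^3_n$ (which has height $\epsilon_n/32$), so this horizontal traversal alone contributes at least $6$ to the hyperbolic length; the vertical crossing of $R^2_n$ (of width $\epsilon_n/2$) contributes another term of order $1/\epsilon_n$. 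Thus $d_V(\sigma_1^*, \sigma_4^{*,\uparrow}) \ge D$ for a constant $D$ which is large (and can be made arbitrarily large by the smallness of $\epsilon_n \le 1/16$); equivalently, the topological quadrilateral $Q \subset V$ with opposite sides $\sigma_1^*, \sigma_4^{*,\uparrow}$ has large conformal modulus.

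By conformal invariance this modulus is preserved under $G$: the images $G(\sigma_1^*), G(\sigma_4^{*,\uparrow})$ bound a quadrilateral of the same large modulus in $H$. The crosscut $\sigma_1^*$ separates $-1$ from $\infty$ in $V$, so \eqref{eq:G} gives $M_1^* \ge 4$, placing $G(\sigma_1^*)$ deep inside $\mathbb{H} \subset H$, where the hyperbolic metric of $H$ is comparable to that of $\mathbb{H}$. Because the normalisation of $G$ sends the unique unbounded end of the tract $V$ to $\Rea \to +\infty$ in $H$, the image $G(\sigma_4^{*,\uparrow})$ lies in the unbounded (forward) component of $H \setminus G(\sigma_1^*)$. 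Combining the modulus/distance bound with this direction information, a quantitative estimate in $\mathbb{H}$ then gives $\min \Rea G(\sigma_4^{*,\uparrow}) - M_1^* \gg 1$, whence $M_4^* > M_1^* + 1$.

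The main obstacle is the direction statement in the last paragraph: a large hyperbolic distance alone cannot distinguish whether the two images are separated in $\Rea$ or in $\Im$ in $H$, and one must use the topology of the tract together with the normalisation ``unbounded end of $V\mapsto\Rea\to+\infty$ in $H$''. The concrete input is that the forward component of $V \setminus \sigma_1^*$ contains $\sigma_4^{*,\uparrow}$ (verified by walking the zigzag from $\sigma_4^{*,\uparrow}$ to infinity without crossing $\sigma_1^*$), and so the forward component of $H \setminus G(\sigma_1^*)$ contains $G(\sigma_4^{*,\uparrow})$; this orientation, combined with the modulus lower bound, yields the desired $\Rea$-gap.
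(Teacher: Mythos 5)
Your overall strategy is the same as the paper's: use the thinness of $R^3_n$ together with \eqref{eq:hypest} and Pick's theorem to force a hyperbolic separation between the ``before'' and ``after'' sides of the loop, and then turn that into a real-part gap in $H$. The slices you compare ($\sigma_1^*$ and $\sigma_4^{*,\uparrow}=\gamma_L$, both at $\Rea z = x_n-\delta_n+\epsilon_n/8$) differ from the paper's choice (the two vertical sides $\gamma_R,\gamma_L$ of $R^3_n$), but both choices are workable, and your limiting/continuity reduction is fine. However, the crucial last step is not actually carried out, and as written it contains an error.

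Concretely: you write that ``a quantitative estimate in $\mathbb{H}$ then gives $\min \Rea G(\sigma_4^{*,\uparrow}) - M_1^* \gg 1$.'' This is false with $\min$: the endpoints of $\gamma_L$ lie on $\partial V$, so $\Rea G\to \partial H\subset\{\Rea\le 0\}$ along $\gamma_L$ near its ends, and hence $\inf_{\gamma_L}\Rea G\le 0 < M_1^*$. What you need (and all that is needed) is $\max_{\gamma_L}\Rea G > M_1^*+1$, and the mechanism for proving it is exactly the part you flag as ``the main obstacle'' and then leave as a black box. The paper closes this gap in two concrete moves that your write-up is missing: (i) since the side with large $\Rea G$ (your $\sigma_1^*$, the paper's $\gamma_R$) contains a point $\alpha$ with $\Rea G(\alpha)\ge 4$, and $\partial H\subset\{\Rea\le 0\}$, the ball $D(G(\alpha),1)$ has hyperbolic diameter $\le 1$ in $H$ by \eqref{eq:hypest}; combined with the lower bound $d_V\ge 4$ (Pick) this yields the \emph{Euclidean} estimate $|G(\beta)-G(\alpha)|>1$ for every $\beta\in\gamma_L$; (ii) $G(\gamma_L)\cup\partial H$ separates $G(\alpha)$ from $\infty$, so the horizontal ray from $G(\alpha)$ to the right (which stays in $H$ because $\Rea G(\alpha)\ge 4$) must hit $G(\gamma_L)$, and the hitting point then has real part $>\Rea G(\alpha)+1$ by (i). Passing from large hyperbolic distance (or modulus) directly to a real-part gap, without the intermediate Euclidean bound and the ray argument, is precisely where your sketch stops short; a large hyperbolic distance plus ``forward'' direction alone does not, without (i), rule out $G(\gamma_L)$ running far out in the imaginary direction while staying near $\Rea\approx M_1^*$.

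Two smaller points. The parenthetical claim that the lower bound $D$ for $d_V(\sigma_1^*,\gamma_L)$ ``can be made arbitrarily large by the smallness of $\epsilon_n$'' is incorrect: the density lower bound scales like $1/\epsilon_n$ while the forced horizontal traverse of $R^3_n$ scales like $\epsilon_n$, so the product is a fixed constant (the paper gets $4$, you get $6$; both suffice). Also, the detour through conformal modulus of a quadrilateral is unnecessary and would require extra work to make rigorous; the hyperbolic-distance form of the same estimate, as in the paper, is the cleaner and more direct route.
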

\begin{proof}

Suppose that $n \in \N$ and $\uldelta \in \Delta$.
%\[
%R'_n \defeq R^3_n \cap \{ x + iy \in \C \colon x_n - \delta_n + \epsilon_n/8 < x < x_n + \epsilon_n/2 \}.
%\]
Let $\alpha_n$ be a point on the right-hand boundary of $R^3_n$ at which $\operatorname{Re} G(z)$ achieves its maximum; it is easy to see this, and other similar, maxima exist. Let $\beta_n$ be any point of the left-hand boundary of $R^3_n$. 
\begin{claim}\emph{
%\label{claim:1}
We have that} $|G(\beta_n) - G(\alpha_n)| > 1$.
\end{claim}
\begin{subproof}
Suppose, by way of contradiction, that $|G(\beta_n) - G(\alpha_n)| \leq 1$. Since $G$ is a conformal isomorphism, we have by Pick's Theorem, see \cite[Theorem~6.4]{beardonandminda}, that 
\begin{equation}
\label{eq:metrics}
d_V(\alpha_n, \beta_n) = d_H(G(\alpha_n), G(\beta_n)).
\end{equation}

We now estimate the two sides of \eqref{eq:metrics}. All points of $R^3_n$ are a Euclidean distance at most $\epsilon_n/64$ from the boundary of $V$, and so, by \eqref{eq:hypest},
\[
d_V(\alpha_n, \beta_n) \geq \frac{\operatorname{Re} \alpha_n - \operatorname{Re} \beta_n}{\epsilon_n/32} \geq \frac{\epsilon_n/8}{\epsilon_n/32} = 4.
\]

On the other hand, by \eqref{eq:G}, we know that $\operatorname{Re} G(\alpha_n) \geq 4$. Hence, by assumption, all points of the line segment from $G(\alpha_n)$ to $G(\beta_n)$ lie a distance at least $2$ from the boundary of $H$. Hence, by \eqref{eq:hypest},
\[
d_H(G(\alpha_n), G(\beta_n)) \leq \frac{2|G(\alpha_n) - G(\beta_n)|}{2} \leq 1.
\]

This is a contradiction, which completes the proof of the claim.
\end{subproof}

Next let $\alpha_n'$ be a point of the left-hand boundary of $R^3_n$ at which $\operatorname{Re} G(z)$ achieves its maximum. 
\begin{claim}\emph{We have that $\operatorname{Re} G(\alpha_n') - \operatorname{Re} G(\alpha_n) > 1$.}
\end{claim}
\begin{subproof}
This follows from the previous claim, since the image under $G$ of the left-hand boundary of $R^3_n$ separates the image under $G$ of the right-hand boundary of $R^3_n$ from infinity in $H$.
\end{subproof}

Our result follows from this claim, since $G(R^3_n)$ separates $G(R^1_n)$ from $G(R^4_n)$ in $H$, and $\partial G(R^4_n)$ contains the points of greatest real part in $\partial G(R_n^3)$.
\end{proof}

Let $g$ be the function as described in Section~\ref{sec_prelim}, corresponding to the model $(G, V, H)$. We then let $f$ and $h$ be the functions provided by Theorem \ref{thm_approx1.7}. Note that the functions $f, g$ and $h$ all depend on $\uldelta \in \Delta$, though we have suppressed this dependence for simplicity. Note that our choice of the constant $L$ in \eqref{eq_L} implies that
\begin{equation}
\label{eq:h}
|h(e^z)| < e^{-1}, \qfor \Rea z > L + 1.
\end{equation}

The following proposition essentially says that all points in $\Mlog(f)$ with sufficiently large real parts lie inside $V$, independently of the choice of $\uldelta$. Recall that  we have assumed that $x_1>L+3$.
\begin{proposition}
\label{prop_nicereal}
 We have that $\Mlog(f) \cap \{ x + iy \in \C \colon x > x_1 - 2 \epsilon_1 \} \subset V$.
\end{proposition}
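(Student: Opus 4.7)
The plan is to show that on every vertical line $\Rea w = t$ with $t > x_1 - 2\epsilon_1$, the maximum of $|f(e^w)|$ is attained at a point of $V$ (up to $2\pi i$-translation, which is the natural identification since both $\Mlog(f)$ and the set $\exp(V)$ are invariant under the map $w \mapsto e^w$). Recall that $f = g+h$ on $\exp(V)$ and $f = h$ elsewhere, with $|g(e^w)| = e^{\Rea G(w)}$ for $w \in V$. Since $x_1 > L+3$ and $\epsilon_1 < 1/16$, we have $t > L+1$, so \eqref{eq:h} gives $|h(e^w)| < e^{-1}$ for every $w$ with $\Rea w = t$. Consequently, any $w$ with $e^w \notin \exp(V)$ satisfies $|f(e^w)| = |h(e^w)| < e^{-1}$.

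To beat this bound from inside $V$, I would produce, for each such $t$, a point $z^* \in V$ with $\Rea z^* = t$ and $\Rea G(z^*) \geq 4$, yielding $|f(e^{z^*})| \geq e^{\Rea G(z^*)} - |h(e^{z^*})| \geq e^4 - e^{-1}$, which dwarfs $e^{-1}$. The point $z^*$ is furnished by applying \eqref{eq:G} to the vertical slice $\gamma_t \defeq V \cap \{\Rea z = t\}$. I would verify that $\gamma_t$ has the three properties required by \eqref{eq:G}: (i) it is a single vertical open segment with both endpoints on $\partial V$, hence a crosscut of $V$; (ii) it is disjoint from $R_0$, which is immediate because $R_0 \subset \{\Rea z < 0\}$ while $t > 0$; and (iii) it separates $-1$ from $\infty$ in $V$, which holds because any path in $V$ from $R_0$ to $\infty$ must continuously sweep its real part from $-1$ up to $+\infty$ and therefore must cross the line $\Rea z = t$ through a point of $V$, that is, through $\gamma_t$.

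The only mildly delicate point is (i), which I would verify by a short case analysis based on which of the rectangles $R^j_n$ the line $\Rea z = t$ meets: the rectangles making up $V$ glue together across their shared horizontal seams at $y = 0$ and at $y = -\epsilon_n/32$, since on both sides of such a seam there sits a rectangle of $V$, placing the seam in the interior of $V$ rather than on $\partial V$. Thus, in each case, the vertical slice collapses to a single open interval $\{t + iy : a(t) < y < b(t)\}$ whose endpoints $(t, a(t))$ and $(t, b(t))$ are genuine boundary points. Granted (i)--(iii), \eqref{eq:G} hands us the desired $z^*$ on $\gamma_t$, and the dichotomy above forces every maximizer on $\Rea w = t$ into $\exp(V)$, proving the proposition.
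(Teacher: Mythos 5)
Your overall strategy — show $|f\circ\exp|$ is tiny outside $\exp(V)$ on the line $\Rea w=t$ but large at some point inside, using \eqref{eq:G} — is the same as the paper's. However, the verification of your point (i) contains a genuine error. You assert that the horizontal seams at $y=0$ and $y=-\epsilon_n/32$ lie in the interior of $V$ because a rectangle sits on each side; this is false, and in fact it is crucial to the whole construction that it be false. The rectangles all have \emph{open} $y$-ranges that strictly exclude those seam heights, and for many $t$ no other rectangle covers them. For instance, if $t\in I_n=(x_n-\delta_n,\,x_n-\delta_n+\epsilon_n/8)$, the point $t+0i$ lies in none of $R^j_n$ (one has $y\in(0,1)$ just above, $y\in(-1,0)$ just below, and $R^3_n$ only starts at $x=x_n-\delta_n+\epsilon_n/8>t$), so $t+0i\notin V$. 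The seams \emph{are} parts of $\partial V$ separating consecutive passes of the winding tract — if they were interior, the tract would degenerate into a solid strip and there would be no winding, defeating the purpose of the construction.

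Consequently, $\gamma_t\defeq V\cap\{\Rea z=t\}$ is in general \emph{not} a single crosscut: for $t\in I_n$ it is the disjoint union of the segment $\{t\}\times(0,1)\subset R^1_n$ and the segment $\{t\}\times(-1,0)\subset R^4_n$; for $t\in(x_n-\delta_n+\epsilon_n/8,\,x_n+\epsilon_n/2)$ it splits into three pieces (in $R^1_n$, $R^3_n$, $R^5_n$). Your appeal to \eqref{eq:G} therefore does not apply to $\gamma_t$ as stated, and your argument for (iii) only shows that the whole union $\gamma_t$ meets every path from $-1$ to $\infty$, not that any single component is a separating crosscut. The fix is exactly what the paper does: observe only that $\gamma_t$ \emph{contains} a crosscut of $V$ that separates $-1$ from $\infty$ (e.g., in the case $t\in I_n$, the component $\{t\}\times(0,1)\subset R^1_n$), apply \eqref{eq:G} to that sub-crosscut, and note that $\max_{z\in\gamma_t}\Rea G(z)$ is at least the maximum over the sub-crosscut, so the lower bound $e^4-e^{-1}$ still stands. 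With that repair your proof coincides with the paper's.
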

\begin{proof}
Suppose that $z \notin V$, and $\Rea z > x_1 - 2 \epsilon_1 > L + 1$. Then, by \eqref{eq:h}, we have that $|f(e^z)| = |h(e^z)| < e^{-1}$. 

Next, suppose that $t > x_1 - 2 \epsilon_1$. Then $V \cap \{ t + iy \in \C \}$ contains a crosscut of $V$ that separates $-1$ from infinity, and so, by \eqref{eq:gG}, \eqref{eq:G} and \eqref{eq:h},
\[
\max_{\substack{ z \in V \\ \Rea z = t}} |f(e^z)| \geq \max_{\substack{ z \in V \\ \Rea z = t}} |g(e^z)| - \max_{\substack{ z \in V \\ \Rea z = t}} |h(e^z)| \geq \max_{\substack{ z \in V \\ \Rea z = t}} e^{\Rea G(z)} - e^{-1} \geq e^4 - e^{-1} > e^{-1}.		
\]
The result follows.
\end{proof}

We then have the following.
\begin{proposition}
\label{prop_maxmod} 
Suppose that $n \in \N$, and that $\uldelta \in \Delta$. Then there exists a value $t \in I_n$ such that
\begin{equation}
\label{eq:nicemod}
\max_{\substack{z \in R^4_n \\ \Rea z = t'}} |f(e^z)| >\max_{\substack{ z \in R^1_n \\ \Rea z = t'}} |f(e^z)|, \qfor t' \in [t, x_n - \delta_n + \epsilon_n/8).
\end{equation} 
\end{proposition}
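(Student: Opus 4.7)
The plan is to upgrade the real-part estimate of Proposition~\ref{prop_maxRe} to a modulus estimate using the decomposition $f = g + h$ on $\exp(V)$ provided by Theorem~\ref{thm_approx1.7}. Let $t \in I_n$ be the value produced by Proposition~\ref{prop_maxRe}, and for any $t' \in [t, x_n - \delta_n + \epsilon_n/8)$ write
\[
M_4(t') \defeq \max_{\substack{z\in \overline{R^4_n}\\ \Rea z=t'}} \Rea G(z) \quad\text{and}\quad M_1(t') \defeq \max_{\substack{z\in \overline{R^1_n}\\ \Rea z=t'}} \Rea G(z),
\]
so that $M_4(t') > M_1(t') + 1$ by Proposition~\ref{prop_maxRe}. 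First I would verify that for $t' \in I_n$ the horizontal slice $V\cap\{\Rea z = t'\}$ consists precisely of the two segments $R^1_n \cap \{\Rea z = t'\}$ and $R^4_n \cap \{\Rea z = t'\}$: by inspection of the rectangles' definitions and the spacing condition \eqref{eq_epsilon_n}, none of $R^j_n$ for $j \in \{2,3,5,6\}$, nor any $R^j_m$ with $m \neq n$, meets this slice. Consequently the slice is a crosscut of $V \setminus R_0$ separating $-1$ from $\infty$, so \eqref{eq:G} gives $\max(M_1(t'), M_4(t')) \geq 4$; combined with $M_4(t') > M_1(t') + 1$, this forces $M_4(t') \geq 4$.

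Next, since $t' > x_1 - 2\epsilon_1 > L + 1$ (recall $x_1 > L + 3$ and $\epsilon_1 < 1/16$), the error estimate \eqref{eq:h} gives $|h(e^z)| < e^{-1}$ throughout both slices. Picking a point $z_4$ at which $\Rea G$ attains the maximum $M_4(t')$ on the $R^4_n$ slice, the intertwining relation $g\circ\exp = \exp\circ G$ yields $|g(e^{z_4})| = e^{M_4(t')}$, so that by the triangle inequality
\[
\max_{\substack{z \in R^4_n \\ \Rea z = t'}} |f(e^z)| \geq |f(e^{z_4})| \geq e^{M_4(t')} - e^{-1}.
\]
Analogously, for every point $z_1$ in the $R^1_n$ slice one has $|f(e^{z_1})| \leq e^{\Rea G(z_1)} + e^{-1} \leq e^{M_1(t')} + e^{-1}$, hence the upper bound $\max_{z \in R^1_n,\, \Rea z = t'} |f(e^z)| \leq e^{M_1(t')} + e^{-1}$.

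Finally, to close the gap, I would use $M_1(t') < M_4(t') - 1$ together with $M_4(t') \geq 4$ to conclude
\[
e^{M_4(t')} - e^{M_1(t')} \geq e^{M_4(t')}\bigl(1 - e^{-1}\bigr) \geq e^4 - e^3 > 2e^{-1},
\]
from which $e^{M_4(t')} - e^{-1} > e^{M_1(t')} + e^{-1}$, establishing \eqref{eq:nicemod}. The substantive work has already been carried out in Proposition~\ref{prop_maxRe}; the main (minor) obstacle here is making sure the perturbation $h$ is negligible relative to the gap between $M_1$ and $M_4$, and this is precisely why the uniform lower bound $M_4 \geq 4$, obtained from the geometry of the tract via \eqref{eq:G}, is essential.
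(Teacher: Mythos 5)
Your argument is correct and follows the paper's approach: decompose $f = g + h$ via Theorem~\ref{thm_approx1.7}, use $|g(e^z)| = e^{\Rea G(z)}$, import the real-part gap from Proposition~\ref{prop_maxRe}, and note that the error $|h(e^z)| < e^{-1}$ from \eqref{eq:h} is swamped by an exponential lower bound on the slice. One small imprecision: the slice $V \cap \{\Rea z = t'\}$ has two connected components, so it is not itself a crosscut, and the $R^4_n$ component only cuts off a small dead-end piece of $R^4_n$ rather than separating $-1$ from $\infty$ --- it is the $R^1_n$ component alone that is the relevant separating crosscut, which is exactly what the paper invokes, yielding $M_1(t') \geq 4$ and hence $M_4(t') > M_1(t') + 1 > 5$, so your bound $M_4(t') \geq 4$ and the rest of your computation go through unchanged.
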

\begin{proof}
Let $t$ be the value from Proposition~\ref{prop_maxRe}. Suppose that $t' \in [t, x_n - \delta_n + \epsilon_n/8)$. Note that, by the assumption that $x_1 > L + 3$, if $n \in \N$ and $z\in R^1_n$, then $\Rea z> L+1$. In particular, by Observation \ref{claim:ell}, $\max_{\substack{ z \in R^1_n \\ \Rea z = t'}} \exp (\Rea G(z))>4$. By this, Proposition~\ref{prop_maxRe}, \eqref{eq:gG}, \eqref{eq:G}, and \eqref{eq:h},
\begin{align*}
\max_{\substack{ z \in R^4_n \\ \Rea z = t'}} |f(e^z)| %&=  \max_{\substack{ z \in R^4_n \\ \Rea z = t}} |g(e^z) + h(e^z)| \\
 &\geq\max_{\substack{ z \in R^4_n \\ \Rea z = t'}} |g(e^z)| - e^{-1} \\
																					 &=  \max_{\substack{ z \in R^4_n \\ \Rea z = t'}} \exp \Rea G(z) - e^{-1} \\
																					 &>  \max_{\substack{ z \in R^1_n \\ \Rea z = t'}} \exp (\Rea G(z) + 1)\ - e^{-1} \\
																					 &>  \max_{\substack{ z \in R^1_n \\ \Rea z = t'}} \exp (\Rea G(z))\ + 1 \\
																					 &\geq\max_{\substack{ z \in R^1_n \\ \Rea z = t'}} |g(e^z) + h(e^z)| \\
																					 &=  \max_{\substack{ z \in R^1_n \\ \Rea z = t'}} |f(e^z)|.
\end{align*}
This completes the proof.
\end{proof}

Next, for each $n \in \N$, we define a function $\phi_n \colon \Delta \to \R$ by
\begin{equation}
\label{eq:phin}
\phi_n(\uldelta) \defeq \min \left\{t \in I_n \colon \max_{\substack{ z \in R^1_n \\ \Rea z = t}} |f(e^z)| \leq \max_{\substack{z \in R^4_n \\\Rea z = t}} |f(e^z)| \right\} - x_n. 
\end{equation}

\begin{lemma}
\label{lem_props}
The following all hold, for each $n \in \N$.
\begin{enumerate} 
\item The function $\phi_n$ is well-defined and continuous.\label{lem_props:1}
\item If $\delta_n = 0$, then $\phi_n(\uldelta) > 0$.\label{lem_props:2}
\item If $\delta_n = \epsilon_n/8$, then $\phi_n(\uldelta) < 0$.\label{lem_props:3}
\item If $\phi_n(\uldelta) = 0$, then $\Mlog(f)$ has a discontinuity with real part $x_n$.\label{lem_props:4}
\end{enumerate}
\end{lemma}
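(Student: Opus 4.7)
Introduce the ``gap function''
\[
\Psi_n(t, \uldelta) \defeq \max_{\substack{ z \in R^4_n \\ \Rea z = t}} |f(e^z)| - \max_{\substack{ z \in R^1_n \\ \Rea z = t}} |f(e^z)|,
\]
so that $\phi_n(\uldelta) = \inf S_n(\uldelta) - x_n$ where $S_n(\uldelta) \defeq \{ t \in I_n \colon \Psi_n(t, \uldelta) \geq 0 \}$. By Proposition~\ref{prop_maxmod} the set $S_n(\uldelta)$ is non-empty, and $\Psi_n$ is continuous in $t$ (each slice-maximum is a continuous function of $t$), so $S_n(\uldelta)$ is relatively closed in $I_n$ and the infimum is well-defined. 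For continuity of $\phi_n$ in $\uldelta$, I plan to argue along the chain: (a) the domains $V(\uldelta)$ depend continuously on $\uldelta$ in the Carath\'eodory sense with basepoint $-1$; (b) the Carath\'eodory kernel theorem gives locally uniform convergence of the Riemann maps $G_\uldelta$; (c) the Cauchy-integral construction of Theorem~\ref{thm_approx1.7} transfers this to locally uniform convergence of $f_\uldelta$; (d) hence $\Psi_n(t, \uldelta)$ is jointly continuous, and the strict sign change of $\Psi_n$ across the infimum, enforced by Proposition~\ref{prop_maxmod} together with the real-part margin of Proposition~\ref{prop_maxRe}, upgrades upper semicontinuity of $\inf S_n$ to genuine continuity.

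For parts (2) and (3), the easy case is $\delta_n = \epsilon_n/8$: then $I_n = (x_n - \epsilon_n/8, x_n)$, and Proposition~\ref{prop_maxmod} produces some $t \in I_n$ with $\Psi_n(t, \uldelta) > 0$, so $\inf S_n(\uldelta) \leq t < x_n$ and $\phi_n(\uldelta) < 0$. The more delicate case is $\delta_n = 0$, for which $I_n = (x_n, x_n + \epsilon_n/8)$ forces $\inf S_n(\uldelta) \geq x_n$ tautologically, and the task is to exclude equality. I plan to argue that, as $t \to x_n^+$, the slice $R^4_n \cap \{\Rea z = t\}$ converges in Euclidean distance to the boundary segment $\{x_n\} \times [-1, 0] \subset \partial V(\uldelta)$, so by Carath\'eodory's theorem on boundary continuity of $G$ its image converges to the corresponding arc of $\partial H$; since $\partial H \subset \{\Rea \leq 0\}$, this yields $\max_{R^4_n, \Rea z = t} \Rea G(z) \leq 3$ on a right-neighbourhood of $x_n$. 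Combined with $\max_{R^1_n, \Rea z = t} \Rea G(z) \geq 4$ from~\eqref{eq:G} and the error bound~\eqref{eq:h}, this gives $\Psi_n(t, \uldelta) < 0$ for $t$ close enough to $x_n^+$, whence $\phi_n(\uldelta) > 0$.

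For part (4), I would first check, directly from the explicit $x$-ranges of the six families of rectangles and the separation~\eqref{eq_epsilon_n}, that for $t \in I_n$ only $R^1_n$ and $R^4_n$ meet the line $\{\Rea z = t\}$ among all rectangles composing $V$. Combined with Proposition~\ref{prop_nicereal}, this means the slice-maximum of $|f(e^z)|$ at $\Rea z = t$ is attained in $R^1_n$ precisely when $\Psi_n(t, \uldelta) < 0$ and in $R^4_n$ precisely when $\Psi_n(t, \uldelta) > 0$. Under the hypothesis $\phi_n(\uldelta) = 0$, for $t \in I_n$ just below $x_n$ the slice-maximum lies in $R^1_n$, whereas for $t$ just above $x_n$ it lies in $R^4_n$; since $R^4_n$ contains no point with $\Rea z < x_n - \delta_n \leq x_n$ and is disjoint from $R^1_n$, the component of $\Mlog(f)$ meeting the line $\{\Rea z = x_n\}$ inside $R^4_n$ cannot contain a point with smaller real part, which is the required discontinuity.

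The principal obstacle is the continuity assertion in part (1): the full chain ``Carath\'eodory continuity of tracts $\Rightarrow$ convergence of Riemann maps $\Rightarrow$ convergence of $f_\uldelta$ $\Rightarrow$ continuity of $\Psi_n$ $\Rightarrow$ continuity of $\inf S_n$'' needs care at every link, and the last link demands a transversality argument ensuring $\Psi_n$ strictly crosses zero at $\inf S_n$ with no ``flat zone'', which I would derive from the real-part margin of Proposition~\ref{prop_maxRe}. A subsidiary technical point is the quantitative prime-end continuity estimate that underlies part (2), needed to provide a uniform bound on $\Rea G$ along the slices of $R^4_n$ as they approach the boundary segment $\{x_n\} \times [-1, 0]$.
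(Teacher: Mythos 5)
Your proposal follows essentially the same route as the paper: establish that the minimiser exists, use the Carath\'eodory kernel theorem and the locally-uniform dependence of $f$ on $\uldelta$ for continuity, locate the minimiser relative to $x_n$ via boundary estimates on $\operatorname{Re} G$, and in part~(4) use the geometry of the rectangles together with Proposition~\ref{prop_nicereal} to force the component of $\Mlog(f)$ through $R^4_n$ to have no points of smaller real part. A few remarks on where the details diverge or fall short.

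In the paper, the near-boundary estimate (the existence of $c>0$ such that $\Rea z - (x_n-\delta_n) < c$ and $z \in R^4_n$ imply $\Rea G(z) < 1/3$, hence $|f(e^z)| < 2$, while $\max_{R^1_n, \Rea z = t}|f(e^z)| > 2$) is established once for \emph{general} $\uldelta$, and does triple duty: it shows that the minimum in~\eqref{eq:phin} is attained on a compact subinterval $[x_n-\delta_n+c,\,x_n-\delta_n+c'] \subset I_n$ (so $\phi_n$ is well-defined with a genuine $\min$), and parts~(2) and~(3) then both fall out immediately by reading off where this interval sits relative to $x_n$. You instead work with an $\inf$ and invoke the boundary estimate only in part~(2) for $\delta_n = 0$; that is consistent, but if you keep the paper's $\min$ in~\eqref{eq:phin} you need the estimate for all $\uldelta$, not just $\delta_n = 0$. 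Regarding continuity of $\phi_n$ in $\uldelta$: you correctly identify the chain through the Carath\'eodory kernel theorem (the paper cites \cite[Theorem 1.8]{Pommerenke} and the continuity built into \cite{lasse}), and you flag the transversality issue (ruling out a ``flat zone'' of $\Psi_n$) as a gap you would still need to close; the paper simply asserts continuity at this point without elaborating, so you are being more candid than the source, but as written your proof is incomplete here.

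The one genuine misstep is in part~(4). You claim that ``for $t$ just above $x_n$ [the slice-maximum] lies in $R^4_n$.'' That does not follow from $\phi_n(\uldelta)=0$: the definition only gives $\Psi_n(t,\uldelta) < 0$ for $t \in (x_n-\delta_n, x_n)$ and $\Psi_n(x_n,\uldelta) \geq 0$ (so in fact $\Psi_n(x_n,\uldelta)=0$ by continuity), and $\Psi_n$ may dip negative again immediately to the right of $x_n$. Fortunately the claim is not needed. What is needed, and what the paper supplies via Proposition~\ref{prop_maxmod} together with $\Mlog(f) \cap \{\Rea z \in I_n\} \subset R^1_n \cup R^4_n$, is that the component $\gamma$ of $\Mlog(f)$ through the point $z \in R^4_n$ with $\Rea z = x_n$ does not meet $R^1_n$ at all; then, since any point of $\gamma$ with real part in $(x_n-\delta_n,x_n)$ would have to lie in $R^1_n$ (as $\Psi_n<0$ there), and points of real part $< x_n-\delta_n$ lie outside $R^4_n$, one concludes $\gamma$ has no point of real part less than $x_n$. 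You assert ``is disjoint from $R^1_n$'' as if it were automatic from $R^1_n \cap R^4_n = \emptyset$, but $\gamma$ could a priori exit $R^4_n$ to the right and re-enter the slab inside $R^1_n$; the paper's appeal to Proposition~\ref{prop_maxmod} is doing real work here and your sketch skips it.
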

\begin{proof}
Fix $n \in \N$. First we show that $\phi_n$ is well-defined. By Proposition~\ref{prop_maxmod}, there exists $t \in I_n$ such that $$\max_{\substack{ z \in R^1_n \\ \Rea z = t}} |f(e^z)| < \max_{\substack{ z \in R^4_n \\ \Rea z = t}}|f(e^z)|, $$ 
and hence the set over which the minimum is taken in \eqref{eq:phin} is non-empty. Moreover, note that the image under $G$ of points in $V$ that are close to $\partial V$ must also be close to $\partial H$, and thus have small real part. In particular, there exists a constant $c>0$ so that if $z \in R^4_n$ and $\Rea z-(x_n - \delta_n)<c$, then $\Rea G(z)<1/3$. By this, \eqref{eq:gG} and \eqref{eq:h}, we have that if $z \in R^4_n$ and $\Rea z-(x_n - \delta_n)<c$, then
\begin{equation}
\label{eq:fnearbdry}
|f(e^z)| = |g(e^z) + h(e^z)| \leq |g(e^z)| + |h(e^z)| \leq e^{\Rea G(z)} + e^{-1} < 2.
\end{equation}
Hence, by \eqref{eq:G} and \eqref{eq:h}, for all $t\in(x_n -\delta_n,x_n -\delta_n+c)$, 
\begin{equation}\label{eq_maxIn}
\max_{\substack{ z \in R^1_n \\ \Rea z = t}} |f(e^z)| > \max_{\substack{ z \in R^1_n \\ \Rea z = t}} |g(e^z)| - e^{-1} = \exp(\max_{\substack{ z \in R^1_n \\ \Rea z = t}} \Rea G(z)) - e^{-1} > 2 > \max_{\substack{ z \in R^4_n \\ \Rea z = t}}|f(e^z)|.
\end{equation}

This means that  we can replace the non-compact interval $I_n$ in \eqref{eq:phin} by a compact interval 
\begin{equation}\label{eq_newI}
[x_n -\delta_n+c, x_n -\delta_n+c'] \subset I_n
\end{equation}
for some $c<c'<\epsilon_n/8$. It then follows by continuity of $f$ that the minimum in \eqref{eq:phin}  is attained, and so $\phi_n$ is well-defined.

Next, we observe that, in the Carath\'eodory kernel topology, the tract $V(\uldelta)$ depends continuously on $\uldelta$, using the product topology on $\Delta$. It then follows from the Carath\'eodory kernel theorem (\cite[Theorem 1.8]{Pommerenke}) that the function $G$ depends continuously on $\uldelta$ in the topology of locally uniform convergence, and in turn, so does the function $g$ from \eqref{eq:gG}. Moreover, it follows from the construction of the function $f$ in \cite{lasse}, that the function $f$ depends continuously on $\uldelta$ in the topology of locally uniform convergence; see \cite[Theorem 2.1]{lasse} and the proof of  \cite[Corollary 4.5]{lasse}. It follows that the function $\phi_n$ is continuous, and this completes the proof of \eqref{lem_props:1}.

Suppose that $\delta_n = 0$. As noted above, the minimum in \eqref{eq:phin} is attained in the interval specified in \eqref{eq_newI}, which in this case only contains points of real part greater than $x_n$. Thus $\phi_n(\uldelta) > 0$, and \eqref{lem_props:2} follows. If $\delta_n = \epsilon_n/8$, then all points in the same interval are of real part less than $x_n$. We deduce \eqref{lem_props:3}.

Finally, suppose that $\phi_n(\uldelta) = 0$. To prove \eqref{lem_props:4}, we need to show that $\Mlog(f)$ has a discontinuity at real part $x_n$. By Proposition~\ref{prop_nicereal} and the geometry of $V$, we have $\Mlog(f) \cap \{ t + iy \in \C \colon t \in I_n \} \subset R_n^1\cup R_n^4$. Moreover,
% by \eqref{eq_maxIn}, $$\Mlog(f) \cap \{ t + iy \in \C \colon t \in (x_n -\delta_n,x_n -\delta_n+c) \} \subset R_n^1.$$ In fact, by
by \eqref{eq:phin} and since $\phi_n(\uldelta) = 0$, we have that
\[
\Mlog(f) \cap \{ t + iy \in \C \colon t \in (x_n -\delta_n,x_n) \} \subset R_n^1.
\]
In addition, by the continuity of $f$, $\Mlog(f)$ contains a point $z\in R_n^4$ with real part equal to $x_n$. By Proposition~\ref{prop_maxmod}, the component $\gamma$ of $\Mlog(f)$ containing $z$ does not meet $R^1_n$. Hence, by the definition of the function $\phi_n$, $\gamma$ does not contain any points of real part smaller than $x_n$. In particular, $\Mlog(f)$ has a discontinuity at $z$, and so the result follows.
\end{proof}
As mentioned earlier, we have now shown that it is possible to construct discontinuities in $\Mlog(f)$ close to the necessary values. It remains to show that a suitable choice of $\uldelta$ leads to our result. 
\begin{lemma}
There exists $\uldelta \in \Delta$ with the property that $\phi_n(\uldelta)=0$, for all $n\in \N$. In particular, $\Mlog(f)$ has a discontinuity with real part $x_n$, for each $n\in \N$.
\end{lemma}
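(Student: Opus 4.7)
The plan is to obtain $\underline{\delta}$ as a simultaneous zero of all the $\phi_n$ by a fixed-point argument on the parameter space $\Delta$. Although Lemma~\ref{lem_props}(\ref{lem_props:1})--(\ref{lem_props:3}) combined with the intermediate value theorem already produce, for each $n$ and each fixed choice of the remaining coordinates $(\delta_m)_{m\neq n}$, some $\delta_n\in[0,\epsilon_n/8]$ with $\phi_n(\underline{\delta})=0$, adjusting any one coordinate perturbs the entire tract $V(\underline{\delta})$ and hence every other $\phi_m$; so a coordinatewise iteration cannot be expected to converge, and a genuinely simultaneous argument is required.

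First I would equip $\R^\N$ with the product topology, under which $\Delta=\prod_{n\in\N}[0,\epsilon_n/8]$ is a non-empty compact (by Tychonoff's theorem) convex subset of a locally convex Hausdorff topological vector space. Next I would define a self-map $F\colon \Delta\to \Delta$ coordinate-wise by
\[
F(\underline{\delta})_n \defeq \delta_n + \phi_n(\underline{\delta}), \qquad n\in\N.
\]
Since $\phi_n(\underline{\delta})+x_n\in I_n=(x_n-\delta_n,x_n-\delta_n+\epsilon_n/8)$ by definition \eqref{eq:phin}, one has $F(\underline{\delta})_n\in(0,\epsilon_n/8)$, so $F$ does indeed map $\Delta$ into itself. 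Continuity of $F$ in the product topology reduces to continuity of each coordinate $F(\cdot)_n$, which is Lemma~\ref{lem_props}(\ref{lem_props:1}).

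Finally I would apply the Schauder--Tychonoff fixed-point theorem to $F$, obtaining some $\underline{\delta}\in\Delta$ with $F(\underline{\delta})=\underline{\delta}$, equivalently $\phi_n(\underline{\delta})=0$ for every $n\in\N$. Lemma~\ref{lem_props}(\ref{lem_props:4}) then yields a discontinuity of $\Mlog(f)$ with real part $x_n$ for each $n\in\N$, which is the assertion of the lemma.

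I do not expect a significant obstacle here, since the substantive work — in particular the \emph{joint} continuity of each $\phi_n$ on the infinite-dimensional parameter space $\Delta$ in the product topology, which rests on Carath\'eodory kernel convergence of $V(\underline{\delta})$ and the corresponding locally uniform continuous dependence of $f$ on $\underline{\delta}$ — has already been carried out inside Lemma~\ref{lem_props}. Once this joint continuity and the boundary sign information of parts (\ref{lem_props:2})--(\ref{lem_props:3}) are in hand, the fixed-point step is essentially formal; a concrete alternative, should one prefer to avoid Schauder--Tychonoff, would be to apply Brouwer on each finite-dimensional truncation and extract a product-topology subsequential limit using compactness of $\Delta$.
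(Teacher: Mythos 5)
Your proof is correct and takes a genuinely different, and in fact cleaner, route than the paper.\ The paper proves a stronger statement (simultaneous zeros over an arbitrary $A\subset\N$) by introducing, for each finite $A$, an auxiliary self-map $\Phi$ of the finite-dimensional cube $\Delta(A)$ built from rescaled $\phi_n$'s, establishing that $\Phi$ is surjective via a degree/homotopy-group argument, reading off the desired zero from surjectivity, and then handling infinite $A$ by exhausting it with nested finite subsets and extracting a product-topology limit. You instead define the single map $F(\uldelta)_n=\delta_n+\phi_n(\uldelta)$ on all of $\Delta$ at once. Your check that $F$ sends $\Delta$ into itself is sound: by \eqref{eq:phin} and the compactness argument in the proof of Lemma~\ref{lem_props}\eqref{lem_props:1}, $\phi_n(\uldelta)+x_n$ lies in the open interval $I_n$, hence $\phi_n(\uldelta)\in(-\delta_n,-\delta_n+\epsilon_n/8)$ and $F(\uldelta)_n\in(0,\epsilon_n/8)$. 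Coordinate-wise continuity (Lemma~\ref{lem_props}\eqref{lem_props:1}) gives continuity of $F$ in the product topology, $\Delta$ is compact and convex in the locally convex Fr\'echet space $\R^\N$, and Schauder--Tychonoff delivers a fixed point, i.e.\ $\phi_n(\uldelta)=0$ for all $n$, in one step. What your approach buys is that it collapses the paper's two-stage argument (degree argument on finite cubes plus a limiting step) into a single appeal to a standard fixed-point theorem, and it dispenses with the surjectivity / algebraic-topology machinery; what the paper's route buys is that it stays with the elementary Brouwer-type finite-dimensional picture, at the cost of the extra limiting argument. Your closing remark that Brouwer on finite truncations plus compactness would also work is essentially the paper's strategy, so you have correctly identified both options.
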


\begin{proof}
The proof of this lemma is closely related to \cite[Proof of Theorem 7.4]{lasse}, although we provide additional detail for the convenience of the reader. We shall, in fact, prove the stronger result that for any $A\subset \N$, there exists $\uldelta \in \Delta$ with the property that $\phi_n(\uldelta)=0$ for all $n\in A$. The result then follows by Lemma \ref{lem_props} part \eqref{lem_props:4}. It is useful to note that, by Lemma \ref{lem_props} parts \eqref{lem_props:2} and \eqref{lem_props:3}, for each $n\in \N$, there exist constants $M_n >0$ and $N_n < 0$ such that $\phi_n(\uldelta)\geq M_n$ when $\delta_n=0$, and $\phi_n(\uldelta)\leq N_n$ when $\delta_n=\epsilon_n/8$.

Clearly, if $A$ is a single point, then the claim is immediate, using the intermediate value theorem. 
	
Suppose that $A$ contains finitely many points, say $A = \{ a_1, a_2, \ldots, a_m \}$ for some $m>1$. We define a function from $\Delta(A)\defeq \prod_{n\in A} [0,\epsilon_n/8]$ to itself, as follows. First, for each $y = y_1 y_2 \ldots y_m \in \Delta(A)$, let $\uldelta(y)$ be the sequence defined by setting
\[
\delta_n \defeq
\begin{cases}
y_k, &\text{ if } n = a_k \text{ for some } k, \\
0, &\text{ otherwise}.
\end{cases}
\]
 Then, for each $n\in A$, we define a map $\xi_n \colon \Delta(A) \rightarrow [0,\epsilon_n/8]$ by
	\begin{equation*}
	\xi_n(y)\defeq 
	\begin{cases}
	0, & \text{if } \phi_n(\uldelta(y)) > M_n, \\
	{\epsilon_{n}}/{8}, & \text{if } \phi_n(\uldelta(y)) < N_n, \\
	{\epsilon_{n}}/{8} \cdot \dfrac{M_n - \phi_n(\uldelta(y))}{M_n - N_n}, & \text{otherwise.}
	\end{cases}
	\end{equation*}
	
Note that $\xi_n$ is a continuous and surjective map; this is an immediate consequence of the definition, and by Lemma \ref{lem_props} parts \eqref{lem_props:1}, \eqref{lem_props:2}, and \eqref{lem_props:3}. Hence the map
$\Phi \colon \Delta(A) \rightarrow \Delta(A)$ given by 
\[
\Phi(y)\defeq (\xi_{a_1}(y), \xi_{a_2}(y), \ldots, \xi_{a_m}(y)) 
\]
is a continuous map of an $m$-dimensional closed box to itself, that maps any face of any dimension to itself surjectively. In particular, $\Phi(\partial \Delta(A)) = \partial \Delta(A)$. 

\begin{claim}\emph{
The function $\Phi$ is a surjection.}
\end{claim}
\begin{subproof}
The proof of this result is a standard argument from algebraic topology. 

Suppose, for the sake of contradiction, that $\Phi$ omits some value $p$ in the interior of $\Delta(A)$, and let $\mathcal{H}$ be a homeomorphism from $\Delta(A)$ to the closed unit $m$-dimensional ball $B^m$, that sends $p$ to $\bm{0}=(0,\ldots,0)$. In particular, we have that $\mathcal{H}(\partial \Delta(A))=S^{m-1}\defeq \partial B^m$. Define the continuous maps $P\colon B^m\setminus \{\bm{0}\}\rightarrow S^{m-1}$ as $P(x)\defeq x/||x||$, and 
$$\mathcal{F}\defeq  P\circ \mathcal{H} \circ \Phi \circ \mathcal{H}^{-1} \colon  B^m \rightarrow S^{m-1},$$
noting that $\mathcal{F}(S^{m-1})=S^{m-1}$. Then, the composition $S^{m-1}\overset{\iota}{\hookrightarrow} B^m \xrightarrow{\mathcal{F}}S^{m-1}$ is surjective, and thus, induces a non-trivial homomorphism $\mathcal{G}$ of the homotopy group $\pi_{m-1}(S^{m-1})=\Z$ to itself. However, since $\mathcal{F} \circ \iota$ factors through $B^m$, $\mathcal{G}$ factors through the group $\pi_{m-1}(B^m)=0$, which contradicts $\mathcal{G}$ being non-trivial.
\end{subproof}

Consequently, by the surjectivity of $\Phi$, there exists a point $x\in\Delta(A)$ such that 
\[
\xi_n(x)=\epsilon_n/8 \cdot \frac{M_n}{M_n - N_n}, \qfor n\in A.
\] 
In particular, $\phi_n(\uldelta(x))=0$ for all $n\in A$, as required. 

Finally, if $A$ is infinite, then we take an increasing nested sequence of finite subsets $A_k$ that exhaust $A$, and let $\uldelta$ be a limit of the corresponding sequences. Then, the claim follows by continuity of the functions $\phi_n$ from Lemma \ref{lem_props} part~\eqref{lem_props:1}.
\end{proof}
%
%%%
%
\section{Finite order functions}
We shall use the following version of Ahlfors distortion theorem, see for example \cite[Corollary to Theorem 4.8]{ahlfors}, and compare to \cite[Theorem 3.4]{lasse}.
\begin{theorem}\label{thm_ahlfors}
	Let $V \subset \mathbb{C}$ be a simply connected domain, and
	let $\gamma_t, \gamma_{t'} \subset V$ be two maximal vertical line segments at real parts $t<t'$ respectively. Set 
	\begin{equation}\label{eq_S}
	S \defeq \{ x + iy \in \C \colon |y| < 1/2 \},
	\end{equation}
   and let $\phi \colon V \rightarrow S$ be a conformal isomorphism such that each $\phi(\gamma_t)$
	and $\phi(\gamma_{t'})$ connects the upper and lower
	boundaries of the strip $S$, and such that $\phi(\gamma_t)$ separates $\phi(\gamma_{t'})$ from $-\infty$ in $S$. For $t\leq s \leq t'$, let $\theta(s)$ denote the shortest length of a vertical line segment at real part $s$ that separates $\gamma_t$ from $\gamma_{t'}$ in $V$. If $\int_{t}^{t'}ds/\theta(s) \geq 1/2$, then 
	\begin{equation}
	\min_{z \in \gamma_{t'}} \Rea \phi(z)- \max_{w \in \gamma_{t}} \Rea \phi(w)\geq \int_{t}^{t'}\frac{ds}{\theta(s)}- \frac{1}{\pi}\ln 32.
	\end{equation}
\end{theorem}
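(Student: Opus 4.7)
The plan is to prove this by the classical length--area (Cauchy--Schwarz) method that underlies all Ahlfors-type distortion inequalities. The geometric input, encoded by the vertical-extent function $\theta(s)$, gets converted into a pointwise lower bound on $|\phi'|^2$ integrated along vertical segments; this is then integrated in $s$ and compared to the area of the image in $S$.

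First I would argue that for each $s \in [t,t']$, the image $\phi(\sigma_s)$ connects the upper and lower boundaries of $S$. Indeed, $\phi(\sigma_s)$ is a connected subset of $S$ that separates $\phi(\gamma_t)$ from $\phi(\gamma_{t'})$ (since $\phi$ is a homeomorphism), and each of these two curves spans $S$ from top to bottom by hypothesis; an elementary topological argument in the strip then forces $\phi(\sigma_s)$ to span $S$ as well. Hence $\phi(\sigma_s)$ has Euclidean length at least $1$, i.e.\
\begin{equation*}
1 \leq \int_{\sigma_s} |\phi'(s+iy)|\, dy.
\end{equation*}
Combining this with Cauchy--Schwarz on $\sigma_s$, which has length $\theta(s)$, yields $\int_{\sigma_s} |\phi'|^2 \, dy \geq 1/\theta(s)$. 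Integrating in $s$ and applying Fubini together with the conformal area formula, I obtain
\begin{equation*}
\int_t^{t'} \frac{ds}{\theta(s)} \leq \operatorname{Area}\bigl(\phi\bigl(V \cap \{t < \Rea z < t'\}\bigr)\bigr).
\end{equation*}

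Since the image lies in a strip of height $1$, its area is essentially controlled by its horizontal extent, which to leading order equals $\min_{z\in\gamma_{t'}}\Rea\phi(z) - \max_{w\in\gamma_t}\Rea\phi(w)$. The main obstacle, and the source of the additive correction $\tfrac{1}{\pi}\ln 32$, will be the \emph{endpoint loss}: the curves $\phi(\gamma_t)$ and $\phi(\gamma_{t'})$ need not be vertical lines in $S$, so the image region can bulge horizontally past their extremal real parts. Controlling this bulge requires a harmonic-measure estimate, or equivalently an explicit conformal comparison with the strip, bounding how far a top-to-bottom crossing of $S$ can deviate horizontally from its extreme real parts. The hypothesis $\int_t^{t'} ds/\theta(s) \geq 1/2$ is precisely what ensures that the linear term dominates this fixed correction so that the stated inequality is non-trivial. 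Since the result is classical and stated in the required form in \cite{ahlfors}, I would make the length--area chain above explicit to convey the geometric content, and then invoke Ahlfors's calculation for the sharp additive constant rather than reproduce its bookkeeping.
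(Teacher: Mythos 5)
The paper does not prove this theorem --- it is stated as a known result with a citation to Ahlfors' \emph{Conformal Invariants} --- and your proposal ultimately also defers to Ahlfors for the precise additive constant, so the approach matches. Your length--area sketch (Cauchy--Schwarz on the shortest vertical crosscuts, Fubini and the conformal area formula, then Ahlfors' estimate to control the horizontal bulge of the images of the boundary crosscuts) is a correct outline of the standard argument behind the cited result; the only slight misstatement is that the hypothesis $\int_t^{t'} ds/\theta(s)\ge 1/2$ is not there merely to make the conclusion non-vacuous, but is a genuine technical requirement in Ahlfors' proof of the bulge estimate.
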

In addition, we will make use of the following fact regarding the domain $H$ from~\eqref{eq_H}.
\begin{observation}
	\label{obs:H}
	Suppose that $z \in H$ and $x > 0$. Then $\rho_H(z) > \rho_H(z + x)$.
\end{observation}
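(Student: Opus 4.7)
The plan is to exploit the translation invariance of the hyperbolic metric together with the standard monotonicity principle: if $U \subset V$ are hyperbolic domains, then $\rho_V(w) \leq \rho_U(w)$ for $w \in U$, with strict inequality at every point unless the inclusion is a covering map.

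First, I would observe directly from the definition \eqref{eq_H} that $H$ is ``right-invariant'': if $w = u + iv \in H$, so that $u > -14\log_+\vert v\vert$, then for any $x > 0$ we have $u + x > u > -14\log_+\vert v\vert$, so $w + x \in H$. Hence $H + x \subset H$, or equivalently $H \subset H - x$. The inclusion is strict, as witnessed by any point of the form $\bigl(-14\log_+\vert v\vert - x/2\bigr) + iv$ with $\vert v\vert > 1$, which lies in $H - x$ but not in $H$.

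Next, since translation by $x$ is a conformal isomorphism $H - x \to H$ sending $z$ to $z + x$, the conformal invariance of the hyperbolic density gives
\[
\rho_H(z + x) = \rho_{H-x}(z).
\]
Both $H$ and $H - x$ are simply-connected proper subdomains of $\C$, hence hyperbolic, and the inclusion $i \colon H \hookrightarrow H - x$ is a holomorphic map that is \emph{not} a covering map: a covering onto a simply-connected target would have to be an isomorphism, contradicting the strict inclusion established above. The strict Schwarz--Pick lemma therefore yields $\rho_{H-x}(z) < \rho_H(z)$ for every $z \in H$.

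Combining these two ingredients produces $\rho_H(z+x) < \rho_H(z)$, which is the statement of the observation. The only step requiring any computation is the right-invariance of $H$, and this is immediate from the explicit form of the boundary in \eqref{eq_H}; the remaining ingredients are classical facts about the hyperbolic metric, so no essential obstacle is anticipated.
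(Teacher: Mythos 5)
Your proof is correct and takes essentially the same approach as the paper: both use the fact that translation by $x$ maps $H$ conformally onto a proper subdomain (or, in your phrasing, that $H$ is a proper subdomain of $H-x$) together with conformal invariance and strict monotonicity of the hyperbolic density. You simply spell out in more detail why the inclusion is strict and why Pick's lemma is strict, which the paper asserts without elaboration.
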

\begin{proof}
	Let $\tau$ be the conformal map $\tau\colon H\rightarrow H, \tau(z) \defeq z + x$. Since $\tau(H) \subsetneq H$, we deduce by Pick's theorem that
	\[
	\rho_H(z) = \rho_{\tau(H)}(z + x) > \rho_H(z+x),
	\]
	as required.
\end{proof}
\begin{proof}[Proof of Theorem~\ref{theo:finite order}]
	This proof uses the same construction as Theorem~\ref{theo:main}, but with an additional restriction on the domain $V$, since Theorem~\ref{theo:finite order} has one additional hypothesis compared to Theorem~\ref{theo:main}. This is that there exists $C > 1$ such that $r_{n+1} \geq C r_n$, for $n \in \N$. This implies that the sequence $(x_n)_{n \in \N}$, defined in the proof of Theorem~\ref{theo:main} by $x_n \defeq \log r_n$, satisfies that $x_{n+1} - x_n \geq \log C > 0$, for $n \in \N$. This, in turn, implies that the terms in the sequence $(\epsilon_n)_{n\in\N}$ used in the construction in that proof, see \eqref{eq_epsilon_n}, can be chosen to be equal to a positive constant, say $\epsilon>0$. In particular, we have the following. Independent of the choice of
	\[
	\uldelta \in \Delta = \prod_{n\in\N} [0,\epsilon/8],
	\]
	there exist $\alpha > 0$, and a curve $\Gamma \subset V = V(\uldelta)$ from $-1$ to infinity, such that each point of $\Gamma$ is a Euclidean distance at least $\alpha$ from $\partial V$. Moreover, by the geometry of $V$, $\Gamma$ can be chosen so that the following holds. There is a constant $K>0$, independent of the choice of $\uldelta$, such that the Euclidean length of 
	\[
	\Gamma \cap \{ x + iy \in \C \colon x \leq t \}
	\]
	is at most $Kt$, for $t > 0$.
	
	Let $G\colon V\rightarrow H$ and $f$ be the functions resulting from the construction in the proof of Theorem~\ref{theo:main}. By Theorem~\ref{thm_approx1.7}, $f$ is in the class $\B$. The fact that $f$ is of finite order seems to be folklore. However, we have not been able to find a reference, so we prove this fact. 
	
	Let $S$ be the strip from \eqref{eq_S}, and let $\psi$ be the Riemann map from $S$ to $H$ such that $\psi(0) = 1$ and $\psi'(0) > 0$. Set $\phi = \psi^{-1} \circ G$, so that $\phi$ is a conformal map from $V$ to $S$.
	\begin{claim}\emph{
			There exists a constant $c > 0$, such that for all $t \geq 2$ we can choose a vertical crosscut $\sigma_t$ in $S$ such that
			\begin{equation}\label{eq_claim}
			\max_{\substack{ z \in V \\ \Rea z = t}} \Rea G(z) < \max_{z \in \phi^{-1}(\sigma_t)}\Rea G(z),
			\end{equation}
			and for any $\zeta_t \in  \phi^{-1}(\sigma_t)\cap \Gamma$, $d_V(-1,\zeta_t)\leq ct$.}
	\end{claim}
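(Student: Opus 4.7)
The plan is to take $\sigma_t$ to be a vertical crosscut of $S$ lying just to the right of the image $\phi(\gamma_t)$ of the maximal vertical crosscut $\gamma_t \subset V$ at real part $t$. Setting $a_t \defeq \max_{z \in \gamma_t} \Rea \phi(z)$, I would choose $s \defeq a_t + 1$ and $\sigma_t \defeq \{ w \in S \colon \Rea w = s\}$. With this choice the two required properties split cleanly: the strict inequality will be handled by the maximum principle applied to the harmonic function $\Rea \psi$ on a suitable half-strip of $S$, while the hyperbolic distance bound will follow from Pick's theorem together with the length bound along $\Gamma$.

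For the strict inequality \eqref{eq_claim}, using $G = \psi \circ \phi$, the statement is equivalent to
\begin{equation*}
\max_{w \in \phi(\gamma_t)} \Rea \psi(w) < \max_{w \in \sigma_t} \Rea \psi(w).
\end{equation*}
I would apply the maximum principle to $\Rea \psi$ on the open half-strip $S^- \defeq \{ w \in S \colon \Rea w < s \}$, whose boundary in $\overline{S}$ consists of $\sigma_t$ together with two horizontal rays $\Sigma \subset \partial S$. Since $\psi$ sends $\partial S$ into $\partial H$, the defining inequality \eqref{eq_H} of $H$ forces $\Rea \psi \leq 0$ on $\psi(\Sigma)$. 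On the other hand, since $\psi \to +\infty$ along the positive real axis, for $t$ sufficiently large $\psi(s) \in \R_+$ is large and $\max_{\sigma_t} \Rea \psi > 0$. Hence the supremum of $\Rea \psi$ on $\overline{S^-}$ is attained on $\sigma_t$, and since $\phi(\gamma_t)$ is a compact subset of the open set $S^-$, the strong maximum principle yields the strict inequality.

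For the hyperbolic distance bound, observe first that $\phi(-1) = \psi^{-1}(1) = 0$, so $\phi^{-1}(\sigma_t)$ is a crosscut of $V$ separating $-1$ from infinity along the tract direction; consequently $\phi^{-1}(\sigma_t) \cap \Gamma$ is non-empty. For any $\zeta_t$ in this intersection, Pick's theorem gives $d_V(-1, \zeta_t) = d_S(0, \phi(\zeta_t))$, and since $|\Im \phi(\zeta_t)| < 1/2$, a direct computation in the strip yields $d_S(0, \phi(\zeta_t)) \leq \pi s + O(1)$. It then remains to show $s = a_t + 1 = O(t)$. To this end I would fix a point $z_0 \in \Gamma \cap \gamma_t$; the length bound $|\Gamma \cap \{\Rea z \leq t\}| \leq Kt$, combined with $\operatorname{dist}(\Gamma, \partial V) \geq \alpha$ and the density estimate \eqref{eq:hypest}, gives $d_V(-1, z_0) \leq 2Kt/\alpha$, so by Pick $\Rea \phi(z_0) \leq 2Kt/(\alpha \pi)$. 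Finally a uniform bound $\max_{\gamma_t} \Rea \phi - \Rea \phi(z_0) \leq C_0$ coming from conformal distortion estimates (each vertical cross-section of $V$ has length at most $2$, uniformly in $\uldelta$) completes the estimate $a_t = O(t)$.

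The main obstacle is the last step, namely the uniform bound on the variation of $\Rea \phi$ across the cross-section $\gamma_t$; this is what ensures that the maximum of $\Rea \phi$ on $\gamma_t$ is comparable to its value at the ``central'' point $z_0 \in \Gamma$. This can be handled either via a standard Koebe-type distortion argument, or more cleanly by invoking Ahlfors' distortion theorem (Theorem \ref{thm_ahlfors}) to compare $\gamma_t$ with $\gamma_{t+T}$ for a universal constant $T$ chosen so that $T / \sup \theta \geq (1/\pi)\log 32 + 2$, which simultaneously bounds $a_t$ and pins down the relative position of $\sigma_t$. Crucially, both the width bound $\sup \theta \leq 2$ and the length bound along $\Gamma$ must hold uniformly in the parameter $\uldelta \in \Delta$, which is guaranteed by the construction of $V$ in the proof of Theorem~\ref{theo:main}.
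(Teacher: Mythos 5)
Your maximum-principle argument for the inequality \eqref{eq_claim} is a genuinely different route from the paper's. The paper chooses the vertical crosscut $\gamma_t$ at real part $t$ (after reducing to the case where $\gamma_t$ separates $-1$ from $\infty$ in $V$), picks $t'\in[t+3,t+3+\epsilon]$ so that $\gamma_{t'}$ also separates, and applies Ahlfors' distortion theorem (Theorem~\ref{thm_ahlfors}) to produce a vertical line $\sigma_t$ sitting strictly between $\phi(\gamma_t)$ and $\phi(\gamma_{t'})$; the inequality is then immediate because $\phi^{-1}(\sigma_t)$ separates $\gamma_t$ from $\infty$. Your replacement --- placing $\sigma_t$ at $\Rea w=a_t+1$ and applying the maximum principle to the bounded harmonic function $\Rea\psi$ on the half-strip $S^-$, using that $\Rea\psi\le 0$ on $\partial S$ and at the prime end $-\infty$ because $\partial H\subset\{\Rea w\le 0\}$ --- is clean and correct for the inequality, though you should note (as the paper does explicitly) that when $\gamma_t\subset R^4_n$ the segment does not separate $-1$ from $\infty$ and one must pass to a nearby $\tilde t$.

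The genuine gap is in the hyperbolic-distance bound. Writing $d_V(-1,\zeta_t)=d_S(0,\phi(\zeta_t))$ and asserting ``since $|\Im\phi(\zeta_t)|<1/2$, a direct computation yields $d_S(0,\phi(\zeta_t))\le\pi s+O(1)$'' does not work: $|\Im\phi(\zeta_t)|<1/2$ is vacuous (it just says $\phi(\zeta_t)\in S$), and if $\phi(\zeta_t)$ approaches $\partial S$ the hyperbolic distance $d_S(0,\phi(\zeta_t))$ blows up for fixed $s$, since $\rho_S(s+iy)=\pi/\cos(\pi y)\to\infty$ as $|y|\to1/2$. What you actually need is that $|\Im\phi(\zeta_t)|$ is bounded away from $1/2$ \emph{uniformly in $t$ and in $\uldelta$}. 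This is plausible because $\zeta_t\in\Gamma$ is at Euclidean distance $\ge\alpha$ from $\partial V$ while the cross-sections of $V$ have bounded diameter, but it requires a nontrivial harmonic-measure or extremal-length argument, and cannot be read off from Pick's theorem or \eqref{eq:hypest} alone (those control $\rho_V(\zeta_t)$, not $\rho_S(\phi(\zeta_t))$, since they differ by the unknown factor $|\phi'(\zeta_t)|$). You also acknowledge a second gap --- the uniform bound on $a_t-\Rea\phi(z_0)$ --- and sketch an Ahlfors-based fix, but it is not carried out, and moreover $\Gamma\cap\gamma_t$ may be empty when $\gamma_t\subset R^4_n$. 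The paper avoids all of this by choosing $\sigma_t$ between $\phi(\gamma_t)$ and $\phi(\gamma_{t'})$ with $t'\le t+3+\epsilon$: then $\phi^{-1}(\sigma_t)$ lies in the bounded component of $V\setminus\gamma_{t'}$, which is contained in $\{\Rea z\le t'+O(\epsilon)\}$, so the Euclidean length of the subcurve of $\Gamma$ from $-1$ to $\zeta_t$ is at most $Kt'$, and $d_V(-1,\zeta_t)\le (2/\alpha)Kt'\le ct$ follows directly from \eqref{eq:hypest} without ever passing to $d_S$ or estimating $a_t$.
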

	
	\begin{subproof}
	  For each $t>2$, let $\gamma_t$ be the component of $V \cap \{ x + iy \in \C \colon x = t \}$ on which $\max_{z \in V}\Rea G(z)$ is attained. Note that $\gamma_t$ belongs to $R^i_n$ for some $n\geq 1$ and $i\neq 2,3$, and if $i\neq 4$, then $\gamma_t$ separates $-1$ from $\infty$ in $V$. We shall assume that $i\neq 4$, since otherwise we can take instead $\gamma_{\tilde{t}}$ for $t<\tilde{t}<t+\epsilon$, so that $\gamma_{\tilde{t}}\in R^5_n$, and the same argument applies. Note that $\gamma_{t+3}$ either belongs to $R^1_{n}$ or to $R^i_{n'}$ for some $n'\geq n+1$ and $i\geq 1$. Thus, we can choose  $t+3\leq t'\leq t+3+\epsilon$, so that $\gamma_{t'}$  separates $-1$ from $\infty$ in $V$. Then, for all $t\leq s\leq t'$, since the Euclidean length of any segment in $V \cap \{ x + iy \in \C \colon x = s \}$ is at most $2$, we have by Theorem~\ref{thm_ahlfors} that $\min_{z \in \gamma_{t'}} \Rea \phi(z)- \max_{w \in \gamma_{t}} \Rea \phi(w)>0$. Hence we can choose a vertical segment $\sigma_t$ that separates $\phi(\gamma_t)$ from $\phi(\gamma_{t'})$ in $S$. In particular, $\phi^{-1}(\sigma_t)$ separates $\gamma_t$ from infinity in $V$, and so \eqref{eq_claim} holds.%In particular, we can always choose $t'<3+\epsilon$.
	  
	  By \eqref{eq:hypest}, the hyperbolic density at any point in $\Gamma$ is at most $2/\alpha$. Moreover, since the Euclidean length of the subcurve in $\Gamma$ joining $-1$ and any $\zeta_t \in  \phi^{-1}(\sigma_t)\cap \Gamma$ is at most $Kt'$, by choosing $c\defeq 10K/\alpha$, the last part of the claim follows.
	\end{subproof}
		
		Suppose that $t \geq 2$. Let $\sigma_t$ and $\zeta_t$ be as in the previous claim, and set 
		\[
		\mu_t \defeq \max_{z \in \sigma_t} \Rea \psi(z).
		\]
		
		\begin{claim}\emph{
				If $w_t \in \sigma_t$ is such that $\Rea \psi(w_t) = \mu_t$, then $w_t$ and $\psi(w_t)$ are both real, and hence $\psi(w_t) = \mu_t$.}
		\end{claim}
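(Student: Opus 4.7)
The plan is to combine the reflection symmetry of $S$ and $H$ about $\R$ with a sign analysis of the imaginary part of $\psi'$ on the upper half-strip. First, since both $S$ and $H$ are symmetric about the real axis and $\psi$ is the unique conformal isomorphism from $S$ onto $H$ with $\psi(0)=1$ and $\psi'(0)>0$, uniqueness of the Riemann map (equivalently, Schwarz reflection) yields $\psi(\bar z)=\overline{\psi(z)}$ for all $z\in S$. In particular, $\psi$ restricts to a strictly increasing real-analytic diffeomorphism of $\R$ onto $(0,\infty)$, so once $w_t$ is shown to be real, $\psi(w_t)$ is automatically real, and then $\Rea\psi(w_t)=\mu_t$ forces $\psi(w_t)=\mu_t$. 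It therefore suffices to prove that every maximiser of $\Rea\psi$ on $\sigma_t$ lies on the real axis.

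Write $\sigma_t=\{t_0+iy : |y|<1/2\}$ and set $g(y)\defeq\Rea\psi(t_0+iy)$. The symmetry above gives $g(-y)=g(y)$, so it is enough to show that $g$ is strictly decreasing on $[0,1/2)$. A direct Cauchy--Riemann computation gives $g'(y)=-\operatorname{Im}\psi'(t_0+iy)$, and hence the task reduces to proving
\[
\operatorname{Im}\psi'(z)>0 \qfor z\in S^+ \defeq \{z\in S : \operatorname{Im} z>0\}.
\]
The plan to establish this is to apply the strong minimum principle to the harmonic function $\operatorname{Im}\psi'$ on $S^+$. On $\R\cap\overline{S^+}$ one has $\operatorname{Im}\psi'\equiv 0$, by Schwarz reflection applied to $\psi'$; and on $\{\operatorname{Im} z=1/2\}$, the image $\psi(x+i/2)$ traces a portion of the upper part of $\partial H$---either the vertical segment $\{iy : 0<y\leq 1\}$ or the curve $\{-14\log y + iy : y>1\}$, in the direction of increasing $\operatorname{Im}$ as $x$ increases---so that the boundary tangent $\psi'(x+i/2)$ has strictly positive imaginary part. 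Consequently $\operatorname{Im}\psi'\geq 0$ on $\partial S^+$ and is not identically zero, so the minimum principle, applied via an exhaustion of $S^+$ by bounded subdomains to handle the prime end at infinity, gives $\operatorname{Im}\psi'>0$ throughout $S^+$, as required.

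The main obstacle is the orientation analysis on $\{\operatorname{Im} z=1/2\}$: translating the explicit description of $\partial H$ in~\eqref{eq_H} into a definite sign for $\operatorname{Im}\psi'$ there requires carefully tracking the Carath\'eodory boundary correspondence under $\psi$, and the unboundedness of $S^+$ forces a Phragm\'en--Lindel\"of-type argument or exhaustion procedure so that the minimum principle applies cleanly.
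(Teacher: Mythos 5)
Your approach is a genuinely different route from the paper's, and it does not close the key gap. The paper argues with hyperbolic geometry directly: $\psi(\sigma_t)$ is a hyperbolic geodesic in $H$, being the conformal image of the vertical geodesic $\sigma_t$ of $S$; if $\psi(w_t)\notin\R$, then by the reflection symmetry $\overline{\psi(w_t)}\in\psi(\sigma_t)$ too, and the sub-arc $\omega$ of $\psi(\sigma_t)$ joining these conjugate points would be a geodesic. But every point of $\omega$ has real part at most $\mu_t=\Rea\psi(w_t)$, so by Observation~\ref{obs:H} the straight vertical segment joining $\psi(w_t)$ to its conjugate runs through points of lower hyperbolic density and has at most the Euclidean length of $\omega$, whence it has strictly smaller hyperbolic length unless $\omega$ is that segment (which it cannot be); this contradicts the geodesic property. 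This is a purely interior argument needing no boundary regularity and no control near $\infty$.

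Your reduction to $\operatorname{Im}\psi'>0$ on $S^+$ via $g'(y)=-\operatorname{Im}\psi'(t_0+iy)$ is correct, and your orientation analysis on $\{\operatorname{Im} z=1/2\}$ is also essentially right (note only that at the corner $\psi^{-1}(i)$ of $\partial H$, where the interior angle exceeds $\pi$, one has $\psi'\to 0$, so one gets $\operatorname{Im}\psi'\geq 0$ rather than $>0$ there — harmless for a minimum principle but worth stating). The genuine gap is the one you flag yourself and do not resolve: $S^+$ is an unbounded strip of width $1/2$, and $\operatorname{Im}\psi'$ is \emph{not} bounded there. Indeed, since $\mathbb{H}\subset H$, a Schwarz–Pick comparison gives $\psi(x)\geq e^{\pi x}$ for $x>0$, so $|\psi'|$ grows at least like $e^{\pi\Rea z}$ as $\Rea z\to+\infty$. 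The minimum principle therefore cannot be applied directly, and a Phragm\'en–Lindel\"of argument on a strip of width $1/2$ requires the a priori growth bound $\operatorname{Im}\psi'(z)=o\bigl(e^{2\pi\Rea z}\bigr)$; establishing such an upper bound on $\psi'$ near $\infty$ is a nontrivial estimate that your sketch does not supply. Without it, the step "$\operatorname{Im}\psi'\geq 0$ on $\partial S^+$ implies $\operatorname{Im}\psi'>0$ on $S^+$" is not justified, and the proof as written is incomplete. The paper's geodesic comparison is preferable precisely because it bypasses all boundary behaviour and growth issues.
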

		\begin{subproof}
			To see this, suppose that $\psi(w_t)$ is not real. By the symmetry of the domains $S$ and $H$, and the choice of normalisation of the Riemann map, we have that $\psi(\overline{w_t}) = \overline{\psi(w_t)}$. Note that $\overline{w_t} \in \sigma_t$. Consider the part of $\psi(\sigma_t)$ running from $\psi(w_t)$ to $\overline{\psi(w_t)}$; call this curve $\omega$. Clearly $\omega$ cannot be a line segment. Moreover, all points of $\omega$ have real part at most $\Rea \psi(w_t)$. Hence $\omega$ has greater Euclidean length than the line segment from $\psi(w_t)$ to $\overline{\psi(w_t)}$, and runs through points where the hyperbolic density is greater; see Observation~\ref{obs:H}. This is a contradiction, because $\omega$ is a geodesic. Hence $\psi(w_t)$ is real, and our choice of normalisation implies that $w_t$ is also real.
		\end{subproof}
		
		%Let $\zeta_t$ be any point of $\gamma_{t''} \cap \Gamma$. 
		%\begin{claim}\emph{
		As an application of \eqref{eq:hypest} in $H$ and using Pick's theorem, we have that
			\[
			\frac{1}{2} \log \mu_t \leq d_H(1, \mu_t) = d_S(0, w_t)\leq d_S(0,\phi(\zeta_t))= d_V(-1, \zeta_t) \leq ct, \qfor t \geq 2,
			\]
			where the middle inequality is a consequence of the simple fact that for $z \in S$, $\rho_S(z) \geq \rho_S(\Rea z)$.
			%The first inequality is an application of \eqref{eq:hypest} in $H$ and the final one was proven in the first claim of this proof. The two equalities hold by Pick's theorem, because the maps are conformal. The final inequality is an application of \eqref{eq:hypest} in $V$, using the comment earlier about the positive constant $\alpha$ and the curve $\Gamma$.
			
		%This leaves the middle inequality. It is easy to show that there is a constant $\kappa > 0$ such that $\rho_S(z) \geq \rho_S(\Rea z) = \kappa$, for $z \in S$. This implies that the geodesic in $S$ from $0$ to $w_t$ is a line segment. Let $\eta$ denote the geodesic from $0$ to $\phi(\zeta_t)$. Then, since $\Rea \phi(\zeta_t) > w_t$, we have
		%	\[
		%	d_S(0, w_t) = \kappa w_t \leq \int_{\eta} \kappa \ ds \leq \int_{\eta} \rho_S(s) \ ds = d_S(0, \phi(\zeta_t)),
		%	\]
		%	as required.
		%\end{subproof}
		By this, and using \eqref{eq_claim}, it follows that
		\[
		\max_{\substack{ z \in V \\ \Rea z = t}} \Rea G(z)  < \max_{z \in \phi^{-1}(\sigma_t)} \Rea G(z) = \mu_t \leq e^{2ct}, \qfor t \geq 2.
		\]
		It then follows by \eqref{eq:gG} and \eqref{eq_f} that the function $f$ is of finite order. 
	\end{proof}

%%%
%
%%%
%
\bibliographystyle{alpha}
\bibliography{MaxModReferences}
\end{document}